\newcommand{\cenlin}{\centerline}
\newcommand{\ovline}{\overline}
\newcommand{\vect}[1]{\mathbf{#1}}
\newcommand{\rarr}{\rightarrow}
\newcommand{\xrarr}[2]{\xrightarrow[#1]{#2}}
\newcommand{\Rarr}{\Rightarrow}
\newcommand{\LRarr}{\Leftrightarrow}
\newcommand{\backsl}{\backslash}
\newcommand{\nullset}{\varnothing}
\newcommand{\lesq}{\leqslant}
\newcommand{\greq}{\geqslant}
\newcommand{\veps}{\varepsilon}
\newcommand{\forceparindent}{\hskip 1.5em}
\DeclareMathOperator{\card}{card}
\DeclareMathOperator{\diam}{diam}
\newcounter{Formulanum}
\newcounter{Assumptionum}
\newcommand{\assump}[1]{\refstepcounter{Assumptionum}%
\label{#1}%
$\clubsuit$\arabic{Assumptionum}}
\newcommand{\assumpref}[1]{$\clubsuit$\ref{#1}}
\newcommand{\resetAssumpCount}[1]{\setcounter{Assumptionum}{#1}}
\newtheorem{corollary}{Corollary}
\newtheorem{definition}{Definition}
\newtheorem{proposition}{Proposition}
\newenvironment{exampledescript}{$\vartriangleleft$}{$\vartriangleright$}
\newcommand{\biauth}[1]{\textsc{#1}} % Author(s)
\newcommand{\bitita}[1]{\textit{#1}} % Article title
\newcommand{\bititb}[1]{\textrm{#1}} % Book title
\newcommand{\bititu}[1]{\texttt{#1}} % URL
\newcommand{\bijour}[1]{\textrm{#1}} % Journal abbrev.
\newcommand{\bicoor}[3]{%
\ifthenelse{\equal{#1}{}}{}{\textbf{#1}}%
\ifthenelse{\equal{#2}{}}{}{:#2}%
\ifthenelse{\equal{#3}{}}{}{, #3}} % Volume[(Number)]:Pages, Year - "Coordinates"
\newcommand{\bipubl}[1]{\textrm{#1}} % Publisher
\newcommand{\bidoid}[1]{\texttt{doi:#1}} % DOId
\begin{document}

\title{On sound ranging in some non-proper metric spaces}

\author{Sergij V. Goncharov\thanks{Faculty of Mechanics and Mathematics, Oles Honchar Dnipro National University,
72 Gagarin Avenue, 49010 Dnipro, Ukraine.
\textit{E-mail: goncharov@mmf.dnulive.dp.ua}}}

\date{October 2019}

\maketitle

\begin{abstract}
We consider the sound ranging, or source localization, problem ---
find the unknown source-point from known moments when the spherical wave
of linearly, with time, increasing radius reaches known sensor-points ---
in some non-proper metric spaces (closed ball is not always compact).
Under certain conditions we approximate the solution to arbitrary precision
by the iterative processes with and without a stopping criterion.
We also consider this problem in normed spaces with a strictly convex norm
when the sensors are dense on the unit sphere.

Appended is the implementation of the approximation algorithm in Julia language.
\end{abstract}

\cenlin{\small \textit{MSC2010:} Pri 41A65, Sec 54E35, 40A05, 46B20, 68W25}

\cenlin{\small \textit{Keywords:} sound ranging, localization, TDOA, approximation, algorithm, metric space, normed space}

\section*{Introduction}

\forceparindent
Let $(X;\rho)$ be a metric space, i.e. the set $X$ with the metric $\rho \colon X\times X \rarr \mathbb{R}_+$.
At unknown moment $t_0 \in \mathbb{R}$ the unknown ``source'' $\vect{s} \in X$ ``emits the (sound) wave'', which is the sphere
$\bigl\{ \vect{x} \in X \mid \rho(\vect{x}; \vect{s}) = v(t - t_0) \bigr\}$
at any moment $t \greq t_0$. We assume that ``sound velocity'' $v = 1$.

Let $\{ \vect{r}_i \}_{i \in I} \subseteq X$ be an indexed set of known ``sensors''.
For each sensor we also know the moment $t_i = t_0 + \rho (\vect{r}_i; \vect{s})$ when it was reached by the expanding wave.

The \textit{sound ranging} problem (SRP), also called \textit{source localization},
is to find $\vect{s}$ from $(\{ \vect{r}_i \}; \{ t_i \})$. Another name is \textit{time-difference-of-arrival} (TDOA) problem,
because to obtain $\vect{s}$, when possible,
it suffices to know the delays $t_i - t_j$ rather than ``absolute'' $t_i$.

SRPs, usually in Euclidean space and with noisy measurements, appear in many circumstances;
see e.g. \cite[1]{compagnoni2017}, \cite[9.1]{huang2004}, \cite[6]{rezazekavat2019}, \cite[1]{wu2019}
for further references on quite large and diverse literature.
For the first time they were considered and studied at least one century ago (see \cite[pp. 33--39]{aubin2014}).

{\ }

This paper is a generalization of \cite{goncharov2018b}, where we looked into SRP in \textit{proper}
(also called \textit{finitely compact} or \textit{Heine-Borel}) metric spaces,
in which any closed ball is compact. Without Euclidicity in general case, the classical approach
``solve equations $(t_i - t_0)^2 = \sum\limits_j (r^{(i)}_j - s_j)^2$,
where $t_0$ and coordinates $\{ s_j \}$ of $\vect{s}$ are unknowns''
(which we applied in \cite{goncharov2018a} investigating the noiseless SRP in $l_2$)
does not work even if the space is provided with some coordinates, because
they are not so easily ``extractable'' from the equations $t_i - t_0 = \rho( \vect{r}_i ; \vect{s} )$.
Instead, we described the ``approximating'' approach --- the iterative process that converges to the source.

Here we propose the variations of more or less the same approach, adjusted to regard non-properness of underlying space.
Being more general, they work in proper spaces as well.
For the sake of convenience, some content from \cite{goncharov2018b} is repeated with necessary modifications.

{\ }

We introduce some notions --- functions, sets, constructions etc. and the constraints they must satisfy ---
to formulate the approximation algorithm using them. Thus, if they are instantiated in any given space,
the algorithm can be implemented in that space accordingly; see Appendix.

Indeed, we are interested in an algorithm that does not require its ``executor'' (computer)
to be too far beyond mental and physical reach ``of sentient life in this universe'', particularly in terms of 
the elementary actions the executor can perform.
Our executor, for example, cannot run $\card X \greq 2^{\aleph_0}$ calculations in parallel
(then we would simply ``verify each $\vect{x} \in X$, if it is a solution, simultaneously'').
However, some data it needs to operate on may be considered as obtained from an ``oracle''
that is able e.g. to calculate the exact sum of an infinite series in a finite time.

{\ }

We deal with SRP in an ``empty'' space without other waves, reverberation, varying propagation velocity,
imprecise measurements etc. --- without ``physics''; this simplification makes the delays $t_i - t_j$ known exactly.
For certain non-negative function that depends on these delays,
we perform a \textit{root finding} of ``exclude \& enclose'' type (see \cite{byrne2008}), ---
we search for its unique zero instead of search for its extremum,
the latter would be an \textit{optimization} approach, of a kind widely used in solving SRP with noised data in Euclidean space,
particularly based on the maximum likelihood estimation, though other methods exist
(see e.g. \cite{alamedapineda2014}, \cite{bestagini2013}, \cite{chen2002}, \cite{diezgonzalez2019},
\cite{gillette2008}, \cite{huang2004}, \cite{pollefeys2008}, \cite{wu2019}).

{\ }

``$\bullet$'' Well-known statement (see \cite{giles1987}, \cite{jameson1974}, \cite{kolmogorov1975},
\cite{montesinos2015}, \cite{searcoid2007}), included for the sake of completeness.

``$\clubsuit$'' Additional assumption or constraint.

{\ }

One may feel that this paper (except for Appendix) should belong to 1920--30s.

\section{Preparations}

\forceparindent
We recall some basic terms and denotations to avoid ambiguity.

$\bullet$ ``2nd $\bigtriangleup$-inequality'':
$\forall \vect{x}, \vect{y}, \vect{z} \in X$
$\bigl| \rho(\vect{x}; \vect{z}) - \rho(\vect{z}; \vect{y}) \bigr| \lesq \rho(\vect{x}; \vect{y})$.

$\bullet$ Let $f\colon A \times B \rarr \mathbb{R}_+$. Then $\forall u, v \in B$:
$\bigl| \sup\limits_{w \in A} f(w; u) - \sup\limits_{w \in A} f(w; v) \bigr| \lesq
\sup\limits_{w \in A} \bigl| f(w; u) - f(w; v) \bigr|$.

As usual, $\vect{x}_k \xrarr{k \rarr \infty}{} \vect{y}$ means $\rho(\vect{x}_k; \vect{y}) \xrarr{k \rarr \infty}{} 0$.

$\bullet$ Continuity of metric: $\vect{x}_k \xrarr{k \rarr \infty}{} \vect{y}$ $\Rarr$
$\rho(\vect{x}_k; \vect{z}) \xrarr{k \rarr \infty}{} \rho(\vect{y}; \vect{z})$.

$B(\vect{c}; r) = \{ \vect{x} \in X \mid \rho(\vect{x}; \vect{c}) < r \}$,
$B[\vect{c}; r] = \{ \vect{x} \in X \mid \rho(\vect{x}; \vect{c}) \lesq r \}$,
and $S[\vect{c}; r]$ = $\{ \vect{x} \in X \mid \rho(\vect{x}; \vect{c}) = r \}$
denote \textit{open ball}, \textit{closed ball}, and \textit{sphere} with center $\vect{c}$ and of radius $r$.

$\bullet$ For any $B[\vect{c}; r]$ and any $\vect{a} \in X$ with $\rho(\vect{a}; \vect{c}) = d$ we have
$\forall \vect{x} \in B[\vect{c}; r]$: $d - r \lesq \rho(\vect{x}; \vect{a}) \lesq d + r$.

The \textit{closure} of the set $A \subseteq X$ is $\ovline{A} = \bigl\{ \vect{y} \in X \mid \exists \{ \vect{x}_k \}_{k \in \mathbb{N}} \subseteq A \colon \vect{x}_k \xrarr{k \rarr \infty}{} \vect{y} \bigr\}$.

$A \subseteq X$ is said to be \textit{compact} if $\forall \{ \vect{x}_k \}_{k\in \mathbb{N}} \subseteq A$:
$\exists \{ \vect{x}_{k_l} \}_{l \in \mathbb{N}}$, $\exists \vect{x} \in A$: $\vect{x}_{k_l} \xrarr{l \rarr \infty}{} \vect{x}$.

$A \subseteq X$ is said to be \textit{relatively compact} if $\forall \{ \vect{x}_k \}_{k\in \mathbb{N}} \subseteq A$:
$\exists \{ \vect{x}_{k_l} \}_{l \in \mathbb{N}}$, $\exists \vect{x} \in X$: $\vect{x}_{k_l} \xrarr{l \rarr \infty}{} \vect{x}$.

$\bullet$ $A \subseteq X$ is relatively compact \textit{\textbf{iff}} $\ovline{A}$ is compact.

$\bullet$ If $A$ is compact, then any closed subset of $A$ is compact too.

The family of sets $\{ C_j \}_{j \in J}$, $C_j \subseteq X$, is said to be a \textit{cover} of $A \subseteq X$
if $A \subseteq \bigcup\limits_{j \in J} C_j$.

$\bullet$ The closed $A \subseteq X$ is compact \textit{\textbf{iff}} any open cover of $A$ has finite subcover.

$A \subseteq X$ is called \textit{bounded} if
$\mathrm{diam}\, A := \sup\limits_{\vect{x}, \vect{y} \in A} \rho(\vect{x}; \vect{y}) < \infty$.

$\bullet$ $A$ is bounded $\LRarr$ $\exists B[\vect{c}; r] \supseteq A$.

The norm $\| \cdot \|$ of a normed space $(X; \| \cdot \|)$ is called \textit{strictly convex}
if spheres do not contain segments: $\forall \vect{x}, \vect{y} \in S[\theta; 1]$
(where $\theta$ is zero of $X$ as linear vector space) such that $\vect{x} \ne \vect{y}$,
and $\forall \lambda \in (0;1)$: $\| \lambda \vect{x} + (1 - \lambda) \vect{y} \| < 1$.

$\bullet$ The norm is strictly convex \textit{\textbf{iff}} $\bigtriangleup$-inequality becomes equality
only for positively proportional elements: $\forall \vect{x}, \vect{y} \in X$,
if $\| \vect{x} + \vect{y} \| = \| \vect{x} \| + \| \vect{y} \|$
and $\vect{x} \ne \theta$, then $\vect{y} = \lambda \vect{x}$ for some $\lambda \greq 0$.

{\ }

Now we proceed to the SRP.
The source $\vect{s} \in X$ and the emission moment $t_0 \in \mathbb{R}$ are unknown.

{\ }

\assump{asmpSensBound}. The set of sensors $R = \{ \vect{r}_i \}_{i \in I}$ is bounded.

{\ }

These sensors and the moments

\cenlin{$t_i = t_0 + \rho(\vect{r}_i; \vect{s})$, $i \in I$}

\noindent
define the SRP
$\bigl( \{ \vect{r}_i \} ; \{ t_i \} \bigr)$.
Any pair $(\vect{s'}; t') \in X \times \mathbb{R}$ satisfying the set of equations

\cenlin{$t_i = t' + \rho( \vect{r}_i; \vect{s}' )$, $i \in I$}

\noindent
is a \textit{solution} of this SRP.
Or, we call $\vect{s}' \in X$ itself a solution when $t_i - \rho(\vect{r}_i; \vect{s}') \equiv const$ ($ = t'$).

{\ }

\assump{asmpSolUnique}. The solution $\vect{s}$ of the SRP $\bigl( \{ \vect{r}_i \} ; \{ t_i \} \bigr)$ is unique in $X$.

{\ }

This is not the general case; for example, when all $\vect{r}_i$ are equal, any $\vect{s}' \in X$ is a solution,
with $t' = t_1 - \rho( \vect{s}'; \vect{r}_1 )$.
On the other hand, in some spaces we can ensure such uniqueness by placing the sensors appropriately:
in $l_2$ we take $\vect{r}_2 = \theta$, $\vect{r}_i = \vect{e}_{i-2}$ for $i \greq 3$, and $\vect{r}_1 = -\vect{e}_1$
(\cite[Prop. 4]{goncharov2018a}).

\begin{definition}\label{defBackwardMoments}
For any $\vect{x} \in X$ the \emph{backward moments}
$\tau_i(\vect{x}) := t_i - \rho( \vect{x}; \vect{r}_i )$, $i \in I$.
\end{definition}

$\tau_i(\vect{x})$ is the moment when the wave must be emitted from $\vect{x}$ to reach $\vect{r}_i$ at the moment $t_i$.

Since \assumpref{asmpSensBound} implies $R \subseteq B[\vect{c}; r]$, we have $\forall i \in I$:
$|\tau_i(\vect{x})| \lesq |t_0| + \rho(\vect{s}; \vect{r}_i) + \rho(\vect{x}; \vect{r}_i) \lesq$

\cenlin{$\lesq |t_0| + \rho(\vect{s}; \vect{c}) + \rho(\vect{c}; \vect{r}_i) +
\rho(\vect{x}; \vect{c}) + \rho(\vect{c}; \vect{r}_i) \lesq |t_0| + 2r + \rho(\vect{s}; \vect{c}) + \rho(\vect{x}; \vect{c}) =
T(\vect{x})$}

\noindent
where $T(\vect{x}) \in \mathbb{R}_+$ does not depend on $i$.

\begin{definition}\label{defDefectSup}
For any $\vect{x} \in X$ the \emph{defect}
$D_{\infty}(\vect{x}) := \sup\limits_{i,j \in I} \bigl| \tau_i(\vect{x}) - \tau_j(\vect{x}) \bigr|$.
\end{definition}

\begin{definition}\label{defDefectSum}
If $R$ is finite ($I = \{ 1; ...; n\}$) or countable ($I = \mathbb{N}$), then for any $\vect{x} \in X$ the \emph{defect}

\cenlin{$D_1(\vect{x}) := \sum\limits_{i \in I} p_i \bigl| \tau_i(\vect{x}) - \sum\limits_{j \in I} p_j \tau_j(\vect{x}) \bigr|$}

\noindent
where $\{ p_i \}_{i \in I}$ is some fixed probability distribution on $I$ without zeros:
$p_i > 0$ and $\sum\limits_{i \in I} p_i = 1$.
\end{definition}

For the sake of definiteness, consider e.g. $p_i \equiv \frac{1}{n}$ when $I = \{ 1; ...; n \}$
and $p_i = 2^{-i}$ when $I = \mathbb{N}$.

$D_{\infty}(\vect{x}) = \sup\limits_{i \in I} \tau_i(\vect{x}) - \inf\limits_{i \in I} \tau_i(\vect{x})$, and
$D_1(\vect{x}) \lesq \sum\limits_{i \in I} p_i \sum\limits_{j \in I} p_j \bigl| \tau_i(\vect{x}) - \tau_j(\vect{x}) \bigr| \lesq
D_{\infty}(\vect{x}) \lesq 2 T(\vect{x})$.

Hereinafter the usage of $D_1$ requires $R$ to be finite or countable,
while there is no such restriction for $D_{\infty}$.
We consider $D_1$ because in some circumstances its calculation may be ``easier''.

By $D(\vect{x})$ we denote either $D_{\infty}(\vect{x})$ or $D_1(\vect{x})$,
though we assume the same choice for all $\vect{x} \in X$.
Particularly, Propositions \ref{propDefectZeroSol}--\ref{propNegBallSmallEnough} that follow could begin with
``Let $D = D_{\infty}$ or $D = D_1$''.

\begin{proposition}\label{propDefectZeroSol}
$\vect{s}' \in X$ is the solution of the SRP \textbf{iff} $D(\vect{s}') = 0$.
\end{proposition}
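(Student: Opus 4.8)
The plan is to unwind the definitions on both sides and reduce the claim to the elementary fact that a supremum, or a sum with strictly positive weights, of nonnegative reals vanishes exactly when every term vanishes. First I would recall the reformulation already built into the setup: by definition $\vect{s}'$ is a solution of the SRP precisely when the map $i \mapsto \tau_i(\vect{s}') = t_i - \rho(\vect{r}_i;\vect{s}')$ is constant on $I$ (its common value being the emission moment $t'$). So it suffices to show, for each of the two admissible choices of $D$, that $D(\vect{s}') = 0$ is equivalent to constancy of $i \mapsto \tau_i(\vect{s}')$.

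For $D = D_\infty$: since $D_\infty(\vect{s}') = \sup\limits_{i,j \in I} |\tau_i(\vect{s}') - \tau_j(\vect{s}')|$ is a supremum of nonnegative reals — finite, being $\lesq 2T(\vect{s}')$ by the estimate preceding Definition \ref{defDefectSup}, so no subtlety about $\infty$ arises — it equals $0$ iff $|\tau_i(\vect{s}') - \tau_j(\vect{s}')| = 0$ for all $i,j \in I$, i.e. iff $\tau_i(\vect{s}')$ does not depend on $i$. That is exactly the solution condition.

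For $D = D_1$ (so $I$ is finite or countable): I would first note that the weighted mean $\bar{\tau} := \sum\limits_{j \in I} p_j \tau_j(\vect{s}')$ is a well-defined real number, because $|\tau_j(\vect{s}')| \lesq T(\vect{s}')$ together with $\sum_j p_j = 1$ gives absolute convergence. Then $D_1(\vect{s}') = \sum\limits_{i \in I} p_i |\tau_i(\vect{s}') - \bar{\tau}|$ is a sum of nonnegative terms with strictly positive coefficients $p_i$, hence vanishes iff $\tau_i(\vect{s}') = \bar{\tau}$ for every $i \in I$; and that says precisely that $i \mapsto \tau_i(\vect{s}')$ is constant, with common value $\bar{\tau}$. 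Conversely, if $\tau_i(\vect{s}') \equiv c$ then $\bar{\tau} = c$ and every summand is $0$, so $D_1(\vect{s}') = 0$.

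There is no genuinely hard step here; the only points I would flag explicitly are that $D_1$ is well-defined (which uses the boundedness $|\tau_i(\vect{x})| \lesq T(\vect{x})$ established right after Definition \ref{defBackwardMoments}) and that the strict positivity $p_i > 0$ of all weights is exactly what allows the passage from ``the sum is zero'' to ``each term is zero''. Otherwise the argument is a few lines of unwinding definitions.
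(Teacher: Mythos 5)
Your argument is correct and is essentially the paper's own proof: both directions reduce to the observation that $\vect{s}'$ is a solution exactly when $i \mapsto \tau_i(\vect{s}')$ is constant, with the strict positivity $p_i > 0$ doing the work for $D_1$ just as in the paper. The extra remarks on finiteness of $D_\infty$ and well-definedness of the weighted mean are fine but add nothing beyond the estimate $|\tau_i(\vect{x})| \lesq T(\vect{x})$ already recorded in the text.
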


\begin{proof}
If $\vect{s}'$ is such solution, then $\tau_i(\vect{s}') \equiv t'$ $\Rarr$ $\tau_i(\vect{s}') - \tau_j(\vect{s}') \equiv 0$
$\Rarr$ $D(\vect{s}') = 0$.
Contrariwise, $D(\vect{s}') = 0$
(along with $p_i > 0$ for $D_1$) implies $\tau_i(\vect{s}') \equiv t'$, and $(\vect{s}'; t')$ is the solution.
\end{proof}

\begin{corollary}\label{corDefectSingleZero}
$D(\vect{x})$ has exactly one zero in $X$, at $\vect{x} = \vect{s}$.
\emph{(Follows from \assumpref{asmpSolUnique}.)}
\end{corollary}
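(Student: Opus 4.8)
The plan is to obtain the statement directly by composing Proposition \ref{propDefectZeroSol} with \assumpref{asmpSolUnique}; the argument is short, but let me spell out the two checks it rests on.

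First I would confirm that $\vect{s}$ itself is a zero of $D$. Putting $\vect{x} = \vect{s}$ in Definition \ref{defBackwardMoments} and using the defining relation $t_i = t_0 + \rho(\vect{r}_i; \vect{s})$ gives $\tau_i(\vect{s}) = t_i - \rho(\vect{s}; \vect{r}_i) = t_0$ for every $i \in I$; hence every difference $\tau_i(\vect{s}) - \tau_j(\vect{s})$ vanishes, so $D_\infty(\vect{s}) = 0$, and since then also $\sum_{j \in I} p_j \tau_j(\vect{s}) = t_0$ we get $D_1(\vect{s}) = 0$ as well. More economically, $(\vect{s}; t_0)$ is by construction a solution of the SRP, so one direction of Proposition \ref{propDefectZeroSol} already yields $D(\vect{s}) = 0$.

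Second I would rule out any other zero. Suppose $D(\vect{x}') = 0$ for some $\vect{x}' \in X$. The other direction of Proposition \ref{propDefectZeroSol} makes $\vect{x}'$ a solution of the SRP, i.e. $t_i - \rho(\vect{r}_i; \vect{x}') \equiv const$. But by \assumpref{asmpSolUnique} the solution of this SRP is unique in $X$, and $\vect{s}$ is such a solution; therefore $\vect{x}' = \vect{s}$. The two steps together say precisely that $\vect{s}$ is the unique zero of $D$ in $X$.

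I do not expect a genuine obstacle here, since the corollary is a formal consequence of an already-established equivalence plus a standing hypothesis. The single point deserving attention is terminological consistency: the characterization $t_i - \rho(\vect{r}_i; \vect{s}') \equiv const$ used in Proposition \ref{propDefectZeroSol} is exactly the notion of ``solution'' to which \assumpref{asmpSolUnique} refers, so the two results dovetail with no gap between them.
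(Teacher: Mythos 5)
Your argument is correct and is exactly the paper's intended route: the corollary is obtained by combining Proposition \ref{propDefectZeroSol} (zero of $D$ $\Leftrightarrow$ solution of the SRP) with \assumpref{asmpSolUnique}, which is all the paper itself invokes. Your explicit verification that $D(\vect{s}) = 0$ is a harmless elaboration of the same reasoning.
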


\begin{proposition}\label{propDefectDiffUppEstim}
$\forall \vect{x}, \vect{y} \in X$:
$\bigl| D(\vect{x}) - D(\vect{y}) \bigr| \lesq 2 \rho(\vect{x}; \vect{y})$.
\end{proposition}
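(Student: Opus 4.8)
The plan is to exploit that each backward moment $\tau_i$ is nonexpansive. By the 2nd $\bigtriangleup$-inequality, for every $i \in I$ and all $\vect{x}, \vect{y} \in X$ we have $|\tau_i(\vect{x}) - \tau_i(\vect{y})| = \bigl| \rho(\vect{x}; \vect{r}_i) - \rho(\vect{y}; \vect{r}_i) \bigr| \lesq \rho(\vect{x}; \vect{y})$. Both cases of the statement reduce to this single estimate, each time picking up a factor $2$ because the defect is built from a difference of two $\tau$-type quantities.

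For $D = D_{\infty}$: I would apply the stated $\bullet$-property of suprema with $A = I \times I$, $B = X$, and $f\bigl( (i,j); \vect{x} \bigr) := \bigl| \tau_i(\vect{x}) - \tau_j(\vect{x}) \bigr|$, so that $D_{\infty}(\vect{x}) = \sup\limits_{(i,j) \in A} f\bigl( (i,j); \vect{x} \bigr)$ and hence $\bigl| D_{\infty}(\vect{x}) - D_{\infty}(\vect{y}) \bigr| \lesq \sup\limits_{(i,j)} \bigl| f((i,j); \vect{x}) - f((i,j); \vect{y}) \bigr|$. Each term is then bounded, uniformly in $i, j$, by $\bigl| f((i,j); \vect{x}) - f((i,j); \vect{y}) \bigr| \lesq \bigl| (\tau_i(\vect{x}) - \tau_j(\vect{x})) - (\tau_i(\vect{y}) - \tau_j(\vect{y})) \bigr| \lesq |\tau_i(\vect{x}) - \tau_i(\vect{y})| + |\tau_j(\vect{x}) - \tau_j(\vect{y})| \lesq 2\rho(\vect{x}; \vect{y})$, using $\bigl| |a| - |b| \bigr| \lesq |a - b|$ and then the nonexpansiveness of $\tau_i$ and $\tau_j$.

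For $D = D_1$: I would first note that the weighted mean $m(\vect{x}) := \sum\limits_{j \in I} p_j \tau_j(\vect{x})$ is itself nonexpansive, since $|m(\vect{x}) - m(\vect{y})| \lesq \sum\limits_{j} p_j |\tau_j(\vect{x}) - \tau_j(\vect{y})| \lesq \rho(\vect{x}; \vect{y})$ by $\sum_j p_j = 1$. Then for each $i$, $\bigl| |\tau_i(\vect{x}) - m(\vect{x})| - |\tau_i(\vect{y}) - m(\vect{y})| \bigr| \lesq |\tau_i(\vect{x}) - \tau_i(\vect{y})| + |m(\vect{x}) - m(\vect{y})| \lesq 2\rho(\vect{x}; \vect{y})$; multiplying by $p_i$ and summing yields $\bigl| D_1(\vect{x}) - D_1(\vect{y}) \bigr| \lesq \sum\limits_i p_i \cdot 2\rho(\vect{x};\vect{y}) = 2\rho(\vect{x}; \vect{y})$, the term-by-term manipulation being legitimate because the series is dominated by the constant $2\rho(\vect{x}; \vect{y})$.

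The calculation is routine throughout; the only places deserving attention are the passages of $|\cdot|$ through $\sup$ and through $\sum$, supplied respectively by the $\bullet$-lemma on suprema and by absolute convergence of the relevant series. No compactness or properness of $X$ enters here — this is the one purely ``soft'' Lipschitz-type estimate underlying the convergence of the iterative processes later in the paper.
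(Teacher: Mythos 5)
Your proof is correct and follows essentially the same route as the paper: the nonexpansiveness $|\tau_i(\vect{x}) - \tau_i(\vect{y})| \lesq \rho(\vect{x};\vect{y})$ from the 2nd $\bigtriangleup$-inequality, the supremum lemma plus $\bigl||a|-|b|\bigr| \lesq |a-b|$ for $D_\infty$, and the term-by-term estimate with $\sum_i p_i = 1$ for $D_1$ (your packaging of the weighted mean $m$ as nonexpansive is just a tidier arrangement of the paper's same computation). No issues.
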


\begin{proof}
$\bigl| \tau_k(\vect{x}) - \tau_k(\vect{y}) \bigr| =
\bigl| t_k - \rho(\vect{x}; \vect{r}_k) - t_k + \rho(\vect{y}; \vect{r}_k) \bigr| \lesq \rho(\vect{x}; \vect{y})$, therefore
$\bigl| D_{\infty}(\vect{x}) - D_{\infty}(\vect{y}) \bigr| \lesq$

\cenlin{$\lesq \sup\limits_{i,j \in I} \bigl| \tau_i(\vect{x}) -
\tau_j(\vect{x}) - \tau_i(\vect{y}) + \tau_j(\vect{y}) \bigr|
\lesq \sup\limits_{i,j \in I} \bigl| \tau_i(\vect{x}) - \tau_i(\vect{y}) \bigr| +
\sup\limits_{i,j \in I} \bigl| \tau_j(\vect{x}) - \tau_j(\vect{y}) \bigr| =$}

\cenlin{$= \sup\limits_{i \in I} \bigl| \tau_i(\vect{x}) - \tau_i(\vect{y}) \bigr| +
\sup\limits_{j \in I} \bigl| \tau_j(\vect{x}) - \tau_j(\vect{y}) \bigr| \lesq 2 \rho(\vect{x}; \vect{y})$}

Similarly, $\bigl| D_1(\vect{x}) - D_1(\vect{y}) \bigr| =
\Bigl| \sum\limits_{i \in I} p_i \bigl[ \, \bigl| \tau_i(\vect{x}) - \sum\limits_{j \in I} p_j \tau_j (\vect{x}) \bigr| -
\bigl| \tau_i(\vect{y}) - \sum\limits_{j \in I} p_j \tau_j(\vect{y}) \bigr| \, \bigr] \Bigr| \lesq$

\cenlin{$\lesq \sum\limits_{i \in I} p_i \Bigl| \, \bigl| \tau_i(\vect{x}) - \sum\limits_j (...) \bigr| -
\bigl| \tau_i(\vect{y}) - \sum\limits_j(...) \bigr| \, \Bigr| \lesq
\sum\limits_{i \in I} p_i \Bigl| \, \bigl[ \tau_i(\vect{x}) - \tau_i(\vect{y}) \bigr] -
\sum\limits_{j \in I} p_j \bigl[ \tau_j(\vect{x}) - \tau_j(\vect{y}) \bigr] \, \Bigr| \lesq$}

\hfil
$\lesq \sum\limits_{i \in I} p_i \Bigl[ \, \bigl| \tau_i(\vect{x}) - \tau_i(\vect{y}) \bigr| +
\sum\limits_{j \in I} p_j \bigl| \tau_j(\vect{x}) - \tau_j(\vect{y}) \bigr| \, \Bigr] \lesq
\sum\limits_{i \in I} p_i \bigl[ \rho(\vect{x}; \vect{y}) + \sum\limits_{j \in I} p_j \rho(\vect{x}; \vect{y}) \bigr] =
2 \rho(\vect{x}; \vect{y})$\hfil \end{proof}

\noindent
--- $D(\cdot)$ is a Lipschitz function (\cite[9.4]{searcoid2007}).

\begin{corollary}\label{corDefectUnifCont}
$D(\vect{x})$ is uniformly continuous on $X$.
\end{corollary}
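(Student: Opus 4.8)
The plan is to derive this immediately from Proposition \ref{propDefectDiffUppEstim}, which already establishes that $D$ is $2$-Lipschitz on $X$: for all $\vect{x}, \vect{y} \in X$ we have $|D(\vect{x}) - D(\vect{y})| \lesq 2\rho(\vect{x}; \vect{y})$. Since a Lipschitz function is automatically uniformly continuous, essentially nothing remains to be done.

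Concretely, I would argue as follows. Fix an arbitrary $\veps > 0$ and set $\delta := \veps / 3 > 0$ (any positive number strictly below $\veps/2$ works; I take $\veps/3$ merely to keep the final inequality strict). Then for all $\vect{x}, \vect{y} \in X$ with $\rho(\vect{x}; \vect{y}) < \delta$, Proposition \ref{propDefectDiffUppEstim} gives $|D(\vect{x}) - D(\vect{y})| \lesq 2\rho(\vect{x}; \vect{y}) < 2\delta = \tfrac{2}{3}\veps < \veps$. Since $\delta$ depends only on $\veps$ and not on the points $\vect{x}, \vect{y}$, this is precisely uniform continuity of $D$ on $X$.

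There is no real obstacle here: the substantive work was already carried out in Proposition \ref{propDefectDiffUppEstim} (the Lipschitz estimate handling both the $\sup$-type defect $D_\infty$ and the weighted-sum defect $D_1$ via the triangle inequality and the bound $|\tau_k(\vect{x}) - \tau_k(\vect{y})| \lesq \rho(\vect{x}; \vect{y})$). The only thing worth a moment's care is remembering that the statement should be read as covering both choices $D = D_\infty$ and $D = D_1$, exactly as flagged in the remark preceding Proposition \ref{propDefectZeroSol}; but since the Lipschitz bound in Proposition \ref{propDefectDiffUppEstim} holds uniformly for either choice, the conclusion does too.
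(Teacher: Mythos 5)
Your proof is correct and matches the paper's intent exactly: the corollary is stated as an immediate consequence of the $2$-Lipschitz estimate in Proposition \ref{propDefectDiffUppEstim} (the paper even remarks right after it that $D(\cdot)$ is Lipschitz), and your $\veps$-$\delta$ spelling-out of ``Lipschitz implies uniformly continuous'' is the same argument, valid for both $D_\infty$ and $D_1$.
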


\begin{definition}\label{defSmallDefectSmallDistProp}
Let $A \subseteq X$. We say that $A$ has \emph{SDN property} (``if-Small-Defect-then-Near'') when

\cenlin{$\vect{s} \in A$ $\Rarr$ $\forall \delta > 0$:
$\inf\limits_{\vect{x} \in A \backsl B(\vect{s}; \delta)} D(\vect{x}) > 0$}
\end{definition}

\noindent
(Here $\inf \nullset = +\infty$.) I.e. $\vect{s} \in A$ implies $\forall \delta > 0$
$\exists \veps \bigl( = \inf\limits_{\vect{x} \in A \backsl B(\vect{s}; \delta)} D(\vect{x}) \bigr) > 0$:
if $\vect{x} \in A$ and $D(\vect{x}) < \veps$, then $\rho(\vect{x}; \vect{s}) < \delta$.
Clearly, any subset of such $A$ has SDN property as well.

\begin{proposition}\label{propPrecompactIsSDN}
Any relatively compact set in $X$ has SDN property.
\end{proposition}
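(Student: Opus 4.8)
The plan is to argue by contradiction, using the sequential characterization of relative compactness together with the continuity of $D$ and the uniqueness of its zero established above.

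Suppose $A$ is relatively compact and $\vect{s} \in A$, but SDN fails: there is some $\delta > 0$ with $\inf\limits_{\vect{x} \in A \backsl B(\vect{s}; \delta)} D(\vect{x}) = 0$. Since $\inf \nullset = +\infty$, the set $A \backsl B(\vect{s}; \delta)$ must be nonempty, so we may pick a sequence $\{ \vect{x}_k \}_{k \in \mathbb{N}} \subseteq A \backsl B(\vect{s}; \delta)$ with $D(\vect{x}_k) \xrarr{k \rarr \infty}{} 0$, and then $\rho(\vect{x}_k; \vect{s}) \greq \delta$ for every $k$.

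Next I would invoke relative compactness of $A$ to extract a subsequence $\{ \vect{x}_{k_l} \}$ and a point $\vect{x}^* \in X$ with $\vect{x}_{k_l} \xrarr{l \rarr \infty}{} \vect{x}^*$. By Corollary \ref{corDefectUnifCont} (indeed already by the Lipschitz estimate of Proposition \ref{propDefectDiffUppEstim}) $D$ is continuous, so $D(\vect{x}^*) = \lim\limits_{l \rarr \infty} D(\vect{x}_{k_l}) = 0$; by Corollary \ref{corDefectSingleZero} this forces $\vect{x}^* = \vect{s}$. On the other hand, continuity of the metric yields $\rho(\vect{x}_{k_l}; \vect{s}) \xrarr{l \rarr \infty}{} \rho(\vect{x}^*; \vect{s}) = \rho(\vect{s}; \vect{s}) = 0$, which contradicts $\rho(\vect{x}_{k_l}; \vect{s}) \greq \delta > 0$. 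Hence no such $\delta$ exists, i.e. $A$ has SDN property.

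There is essentially no hard step here; the only points requiring mild care are the convention $\inf \nullset = +\infty$ (which makes the claim vacuous precisely when $A \subseteq B(\vect{s}; \delta)$), and the observation that the limit point $\vect{x}^*$ supplied by relative compactness a priori lies only in $\ovline{A}$, not necessarily in $A$ --- but this is harmless, since Corollary \ref{corDefectSingleZero} identifies the unique zero of $D$ over all of $X$, and the final contradiction is drawn from the distance bound $\rho(\vect{x}_{k_l}; \vect{s}) \greq \delta$ rather than from membership in $A$.
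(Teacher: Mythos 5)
Your proof is correct and uses essentially the same ingredients as the paper's: a sequence in $A \backsl B(\vect{s};\delta)$, a convergent subsequence from relative compactness, continuity of $D$ and of the metric, and Corollary \ref{corDefectSingleZero}. The only difference is cosmetic --- the paper argues directly with a minimizing sequence for $\inf D$ and concludes the limit point has $D > 0$ since it lies outside $B(\vect{s};\delta)$, while you phrase it as a contradiction with $D(\vect{x}_k)\to 0$; your remarks on $\inf\nullset=+\infty$ and on the limit lying only in $\ovline{A}$ are exactly the right points of care.
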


\begin{proof}
Let $A$ be such set, $\vect{s} \in A$, then $\forall \delta > 0$ let $G = A \backsl B(\vect{s}; \delta)$,
$\veps = \inf\limits_{\vect{x} \in G} D(\vect{x})$; we claim that $\veps > 0$. Indeed,
$\forall k \in \mathbb{N}$ $\exists \vect{x}_k \in G$: $\veps \lesq D(\vect{x}_k) \lesq \veps + \frac{1}{k}$.
Due to relative compactness $\exists \{ \vect{x}_{k_l} \}_{l \in \mathbb{N}}$, $\exists \vect{x} \in X$:
$\vect{x}_{k_l} \xrarr{l \rarr \infty}{} \vect{x}$. $\lim\limits_{l \rarr \infty} D(\vect{x}_{k_l}) = \veps$,
and it follows from continuity of $D(\cdot)$ that $D(\vect{x}) = \veps$.
Now, continuity of metric implies $\rho(\vect{x}; \vect{s}) \greq \delta$, hence by Cor. \ref{corDefectSingleZero} $\veps > 0$.
\end{proof}

\begin{corollary}\label{corCompactIsSDN}
Any compact set in $X$ has SDN property.
\end{corollary}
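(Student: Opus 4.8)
The plan is simply to reduce Corollary~\ref{corCompactIsSDN} to Proposition~\ref{propPrecompactIsSDN} by observing that compactness is a strengthening of relative compactness. Concretely, I would first recall the two definitions given above: $A \subseteq X$ is compact if every sequence in $A$ has a subsequence converging to some point \emph{of $A$}, while $A$ is relatively compact if every sequence in $A$ has a subsequence converging to some point \emph{of $X$}. Since $A \subseteq X$, a limit lying in $A$ lies a fortiori in $X$, so any compact set is relatively compact. (Alternatively, one could invoke the bullet ``$A$ is relatively compact \textbf{iff} $\ovline{A}$ is compact'': a compact set is closed, hence equal to its own closure, which is then compact.)

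Having established that a compact $A$ is relatively compact, the conclusion is immediate from Proposition~\ref{propPrecompactIsSDN}, which already asserts the SDN property for every relatively compact set. So the corollary follows in one line, with no genuine obstacle — the only point requiring a modicum of care is making sure the reduction uses exactly the definitions of compactness and relative compactness as stated in Section~1, rather than some other equivalent formulation, so that the logical chain ``compact $\Rightarrow$ relatively compact $\Rightarrow$ SDN'' is airtight. No new estimate on $D(\cdot)$ or on the metric $\rho$ is needed beyond what Proposition~\ref{propPrecompactIsSDN} already supplies.
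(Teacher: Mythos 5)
Your reduction is exactly how the paper treats it: Corollary~\ref{corCompactIsSDN} is stated as an immediate consequence of Proposition~\ref{propPrecompactIsSDN}, since by the sequential definitions in Section~1 a compact set is in particular relatively compact. Your proposal is correct and matches the paper's (implicit) argument.
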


\textbf{Non-SDN example.} Let $(X; \rho)$ be $l_2$. By $E = \{ \vect{e}_i \}_{i \in \mathbb{N}}$ we denote the usual
orthonormal basis of $l_2$, that is, the coordinates $e^{(i)}_j = \delta_{ij}$.
Let $I = \mathbb{Z} \backsl \{0\}$, $R = \{ \vect{r}_i \}_{i \in I} := E \cup (-E) =
\{ ...; -\vect{e}_2; -\vect{e}_1; \vect{e}_1; \vect{e}_2; ... \}$ --- $\forall i \in \mathbb{N}$ $\vect{r}_i = \vect{e}_i$,
$\vect{r}_{-i} = - \vect{e}_i$.

We claim that $A = B[\theta; 1]$ does not have SDN property.

\begin{exampledescript}
Obviously, \assumpref{asmpSensBound} is satisfied.
Let $\vect{s} = \theta$ and $t_0 = 0$, then $\forall \vect{r}_i \in R$ $t_i \equiv 1$;
\assumpref{asmpSolUnique} holds because, assuming that $\vect{x} \in l_2$ is a solution,
it must be equidistant from all $\vect{r}_i$,
and $\forall i \in \mathbb{N}$ $\rho(\vect{x}; \vect{r}_i) = \rho(\vect{x}; \vect{r}_{-i})$
$\LRarr$ $\sum\limits_{j \in \mathbb{N}, j \ne i} x_j^2 + (x_i - 1)^2 = \sum\limits_{j \in \mathbb{N}, j \ne i} x_j^2 + (x_i + 1)^2$
$\LRarr$ $|x_i - 1| = |x_i + 1|$ $\LRarr$ $x_i = 0$ i.e. $\vect{x} = \vect{s}$.

Consider $\{ \vect{x}_n \}_{n \in \mathbb{N}}$
with the coordinates $x^{(n)}_j = 1 / \sqrt{n}$ for $j \lesq n$ and $x^{(n)}_j = 0$ for $j > n$.

$\rho(\vect{x}_n; \theta) = \sqrt{\sum\limits_{j=\ovline{1,n}} \frac{1}{n}} = 1$,
so $\vect{x}_n \in A \backsl B(\theta; \delta)$ for any $\delta \in (0;1]$.
$\forall i \in \mathbb{N}$

\cenlin{$\rho(\vect{x}_n; \vect{r}_i) = \begin{cases}
\sqrt{ \bigl[ \sum\limits_{j = \ovline{1,n}, j\ne i} \frac{1}{n} \bigr] + (1 - \frac{1}{\sqrt{n}})^2} =
\sqrt{2 - \frac{2}{\sqrt{n}}}, & i \lesq n,\\
\sqrt{\bigl[ \sum\limits_{j = \ovline{1,n}} \frac{1}{n} \bigr] + 1} = \sqrt{2}, & i > n,
\end{cases}$
$\rho(\vect{x}_n; \vect{r}_{-i}) = \begin{cases}
\sqrt{2 + \frac{2}{\sqrt{n}}}, & i \lesq n,\\
\sqrt{2}, & i > n.
\end{cases}$}

Therefore $D_{\infty}(\vect{x}_n) = \sup\limits_{i \in I} \tau_i(\vect{x}_n) - \inf\limits_{i \in I} \tau_i(\vect{x}_n) =
\sup\limits_{i \in I} \bigl[ 1 - \rho(\vect{x}_n; \vect{r}_i) \bigr] -
\inf\limits_{i \in I} \bigl[ 1 - \rho(\vect{x}_n; \vect{r}_i) \bigr] =
\sqrt{2 + \frac{2}{\sqrt{n}}} - \sqrt{2 - \frac{2}{\sqrt{n}}} \xrarr{n \rarr \infty}{} 0$,
which implies $\forall \delta \in (0;1]$: $\inf\limits_{\vect{x} \in A \backsl B(\theta; \delta)} D_{\infty}(\vect{x}) = 0$.
\end{exampledescript}

Take $\vect{x}_n = \frac{\delta}{\sqrt{n}} \sum\limits_{i=1}^n \vect{e}_i$ to show that, with this $R$,
$\forall r > 0$ $B[\theta; r]$ in $l_2$ is non-SDN.
Cf. the reasonings from the end of Section \ref{secDenseOnSphereNormSpace}, though,
where $A$ is of the same kind, but $R = S[\theta; 1]$.

\begin{proposition}\label{propBallTestReverse}
If $\vect{s} \in B[\vect{c}; r]$, then $D(\vect{c}) \lesq 2r$.
\end{proposition}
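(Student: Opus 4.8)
The plan is to invoke the two facts already available: that the defect vanishes exactly at the source (Corollary~\ref{corDefectSingleZero}), so $D(\vect{s}) = 0$, and that $D(\cdot)$ is $2$-Lipschitz (Proposition~\ref{propDefectDiffUppEstim}). Since $\vect{s} \in B[\vect{c}; r]$ means precisely $\rho(\vect{s}; \vect{c}) \lesq r$, applying the Lipschitz bound with $\vect{x} = \vect{c}$ and $\vect{y} = \vect{s}$ gives
\[
D(\vect{c}) = \bigl| D(\vect{c}) - D(\vect{s}) \bigr| \lesq 2 \rho(\vect{c}; \vect{s}) \lesq 2r,
\]
which is the claim. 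This works uniformly for $D = D_{\infty}$ and $D = D_1$, since both ingredients --- the single-zero property and the Lipschitz estimate --- are stated for either choice.

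If one prefers a self-contained argument that does not route through those two results, one can estimate the backward moments at $\vect{c}$ directly: writing $\tau_i(\vect{c}) = t_0 + \rho(\vect{r}_i; \vect{s}) - \rho(\vect{c}; \vect{r}_i)$ and applying the 2nd $\bigtriangleup$-inequality to the pair $\rho(\vect{r}_i; \vect{s})$, $\rho(\vect{r}_i; \vect{c})$ shows $|\tau_i(\vect{c}) - t_0| \lesq \rho(\vect{s}; \vect{c}) \lesq r$ for every $i \in I$. Hence $|\tau_i(\vect{c}) - \tau_j(\vect{c})| \lesq 2r$ for all $i, j$, so $D_{\infty}(\vect{c}) \lesq 2r$, and then $D_1(\vect{c}) \lesq D_{\infty}(\vect{c}) \lesq 2r$ by the inequality chain recorded just after Definition~\ref{defDefectSum}.

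There is no genuine obstacle here: the statement is a one-line corollary of the Lipschitz property of $D$. The only points to watch are the direction of the inequality defining the closed ball --- we need $\rho(\vect{s}; \vect{c}) \lesq r$, which is exactly the membership $\vect{s} \in B[\vect{c}; r]$ --- and that we are using $D(\vect{s}) = 0$ exactly, not merely that it is small.
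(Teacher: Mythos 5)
Your first argument is correct and is a genuinely different route from the paper's: you combine $D(\vect{s})=0$ (Prop.~\ref{propDefectZeroSol}; uniqueness from Cor.~\ref{corDefectSingleZero} is not even needed) with the $2$-Lipschitz bound of Prop.~\ref{propDefectDiffUppEstim}, both of which indeed precede this statement, so there is no circularity, and the claim drops out in one line for either choice of $D$. The paper instead argues directly from the backward moments: from $\vect{s}\in B[\vect{c};r]$ it gets $t_0\in\bigl[\tau_i(\vect{c})-r;\,\tau_i(\vect{c})+r\bigr]$ for every $i$, so the intersection of these intervals is nonempty, which is equivalent to $D_{\infty}(\vect{c})\lesq 2r$, and then $D_1\lesq D_{\infty}$. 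Your second, self-contained argument is essentially this same proof, just phrased as $|\tau_i(\vect{c})-t_0|\lesq r$ for all $i$ rather than as nonemptiness of the interval intersection. What the Lipschitz route buys is brevity and a clear conceptual reason (the source is the zero of a $2$-Lipschitz function); what the paper's direct route buys is independence from Props.~\ref{propDefectZeroSol}--\ref{propDefectDiffUppEstim} and an intermediate fact ($t_0$ lies in all the intervals $[\tau_i(\vect{c})\pm r]$) that mirrors how the ball test is actually used later. Both are complete; your only cosmetic redundancy is citing the uniqueness corollary where the mere vanishing of $D$ at $\vect{s}$ suffices.
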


\begin{proof}
If $\vect{s} \in B[\vect{c}; r]$,
then $\rho(\vect{r}_i; \vect{c}) - r \lesq \rho(\vect{r}_i; \vect{s}) \lesq \rho(\vect{r}_i; \vect{c}) + r$, $i \in I$ $\LRarr$

\cenlin{$\LRarr$ $t_0 \bigl( = t_i - \rho(\vect{r}_i; \vect{s}) \bigr) \in \bigl[ t_i - \rho(\vect{r}_i; \vect{c}) - r;
t_i - \rho(\vect{r}_i; \vect{c}) + r \bigr]$, $i \in I$}

\noindent
hence $t_0 \in \bigcap\limits_{i \in I} \bigl[ \tau_i(\vect{c}) - r; \tau_i(\vect{c}) + r \bigr] =: C$, so $C \ne \nullset$.
Recalling that $D_{\infty}(\vect{c}) = \sup\limits_{i \in I} \tau_i(\vect{c}) - \inf\limits_{i \in I} \tau_i(\vect{c})$,
it is easy to see that $C \ne \nullset$ \textbf{iff} $D_{\infty}(\vect{c}) \lesq 2r$.
And $D_1(\vect{c}) \lesq D_{\infty}(\vect{c})$.
\end{proof}

\begin{corollary}\label{corBallNegTest}
If $D(\vect{c}) > 2r$, then $\vect{s} \notin B[\vect{c}; r]$.
\end{corollary}

The balls $B[\vect{c}; r]$ that pass the test $D(\vect{c}) \lesq 2r$ are ``suspicious'':
more ``sophisticated'' tests may or may not prove that $\vect{s} \notin B[\vect{c}; r]$.

\begin{proposition}\label{propNegBallSmallEnough}
If $D(\vect{x}) > 0$, $\vect{x} \in B[\vect{y}; r]$, and $r < \frac{1}{4} D(\vect{x})$, then $\vect{s} \notin B[\vect{y}; r]$.
\end{proposition}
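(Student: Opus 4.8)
The plan is to argue by contradiction and reduce everything to the Lipschitz estimate of Proposition~\ref{propDefectDiffUppEstim} together with the fact that $\vect{s}$ is the unique zero of $D$. So suppose, contrary to the claim, that $\vect{s} \in B[\vect{y}; r]$. Since by hypothesis we also have $\vect{x} \in B[\vect{y}; r]$, the triangle inequality yields $\rho(\vect{x}; \vect{s}) \lesq \rho(\vect{x}; \vect{y}) + \rho(\vect{y}; \vect{s}) \lesq r + r = 2r$; that is, both points lie in a common ball of radius $r$, hence within distance $2r$ of each other.

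Next I would feed this into Proposition~\ref{propDefectDiffUppEstim}, which gives $\bigl| D(\vect{x}) - D(\vect{s}) \bigr| \lesq 2 \rho(\vect{x}; \vect{s}) \lesq 4r$. By Corollary~\ref{corDefectSingleZero} (which rests on \assumpref{asmpSolUnique}) the source $\vect{s}$ is the unique zero of $D$ in $X$, so $D(\vect{s}) = 0$ and therefore $D(\vect{x}) \lesq 4r$. This contradicts the assumption $r < \frac{1}{4} D(\vect{x})$, i.e. $4r < D(\vect{x})$, since $D(\vect{x}) > 0$ guarantees the strict inequality is meaningful. Hence $\vect{s} \notin B[\vect{y}; r]$.

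There is essentially no hard step here; the statement is a routine consequence of the Lipschitz continuity of the defect and the single-zero property, and the only point requiring a little care is the observation that $\vect{x} \in B[\vect{y}; r]$ forces $B[\vect{y}; r] \subseteq B[\vect{x}; 2r]$, which is what lets the estimate be taken ``from the point $\vect{x}$'' even though the ball is centered at $\vect{y}$. Alternatively, one could phrase the same argument through Proposition~\ref{propBallTestReverse}: assuming $\vect{s} \in B[\vect{y}; r] \subseteq B[\vect{x}; 2r]$ gives $D(\vect{x}) \lesq 2 \cdot 2r = 4r$ directly, again contradicting $r < \frac{1}{4} D(\vect{x})$. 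Either route closes the proof immediately.
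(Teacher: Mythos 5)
Your proof is correct, and it is organized somewhat differently from the paper's. The paper argues ``forward'': from $\rho(\vect{x};\vect{y}) \lesq r < \frac{1}{4}D(\vect{x})$ and Proposition~\ref{propDefectDiffUppEstim} it deduces a \emph{lower} bound $D(\vect{y}) > \frac{1}{2}D(\vect{x}) > 2r$ at the \emph{center} $\vect{y}$, and then excludes $\vect{s}$ from $B[\vect{y}; r]$ by Corollary~\ref{corBallNegTest}. You instead argue by contradiction, assume $\vect{s} \in B[\vect{y}; r]$, and derive an \emph{upper} bound $D(\vect{x}) \lesq 4r$ at the point $\vect{x}$ --- either via the Lipschitz estimate together with $D(\vect{s}) = 0$, or, in your alternative phrasing, via Proposition~\ref{propBallTestReverse} applied to $B[\vect{x}; 2r] \supseteq B[\vect{y}; r]$. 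The two arguments are essentially contrapositives of one another and require exactly the same constant $\frac{1}{4}$; your second variant is arguably the leaner one, since it invokes only the ball test and no Lipschitz bound at all, whereas the paper's version has the small practical advantage of producing the explicit intermediate estimate $D(\vect{y}) > \frac{1}{2}D(\vect{x})$ at the tested center. One trivial citation point: for $D(\vect{s}) = 0$ the fact you need is the ``solution $\Rightarrow$ zero defect'' direction of Proposition~\ref{propDefectZeroSol}; Corollary~\ref{corDefectSingleZero} (uniqueness of the zero, resting on \assumpref{asmpSolUnique}) is not actually required for this step.
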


\begin{proof}
By Prop. \ref{propDefectDiffUppEstim},
$\rho(\vect{y}; \vect{x}) \lesq r < \frac{1}{4} D(\vect{x})$ $\Rarr$
$|D(\vect{y}) - D(\vect{x})| < \frac{1}{2}D(\vect{x})$ $\Rarr$ $D(\vect{y}) > \frac{1}{2} D(\vect{x}) > 0$.

Since $r < \frac{1}{4} D(\vect{x}) < \frac{1}{2} D(\vect{y})$,
Cor. \ref{corBallNegTest} implies $\vect{s} \notin B[\vect{y}; r]$.
\end{proof}

{\ }

\textbf{Covershapes and coverands.}
\textit{Covershape} is a certain way to define the subset of $X$
by its \textit{anchor} $\vect{x} \in X$ and its \textit{size} $r > 0$,
such subset to contain at least $\vect{x}$ and to be contained in $B[\vect{x}; r]$.

\textit{Coverand}, denoted by $C[\vect{x}; r]$, is the covershape defined by given $\vect{x}$ and $r$.

The example of a covershape is
``an intersection of $S \subseteq X$ and a closed ball whose center is in $S$'';
one of corresponding coverands is e.g. $B[\theta; 1] \cap S$ ($\vect{x} = \theta$, $r = 1$).

We distinguish them because, for a given space, we can use a single covershape,
based on the properties of that space, while taking many instances of this ``shape'', which are the coverands.
On the other hand, for one and the same space there are usually many covershapes as well.

Put differently, covershape is a type, and coverand is an object of that type.

{\ }

We consider covershapes with the following 2 properties:

{\ }

\assump{asmpCoverandFiniteCoverByHalf}.
For any coverand $C[\vect{c}; r]$ there is a finite cover by $C[\vect{c}_i; \frac{r}{2}]$, where $\vect{c}_i \in C[\vect{c}; r]$.

{\ }

\assump{asmpCoverInCompact}.
For any coverand $C[\vect{c}; r] = F_0$
let $F_1 = \bigcup\limits_{\vect{x} \in F_0} C[\vect{x}; \frac{r}{2}]$, \textellipsis,
$F_k = \bigcup\limits_{\vect{x} \in F_{k-1}} C[\vect{x}; \frac{r}{2^{k}}]$, \textellipsis
Then $F_{\infty} = \bigcup\limits_{k \in \mathbb{Z}_+} F_k$ has SDN property.

{\ }

Note that $F_{k-1} \subseteq F_k$.
Also, $F_{\infty} \subseteq B[\vect{c}; 2r]$, ---
$\forall \vect{x} \in F_{\infty}$: $\vect{x} \in F_k$ for some $k$,
hence $\vect{x} \in C[\vect{c}_{k}; \frac{r}{2^k}] \subseteq B[\vect{c}_{k}; \frac{r}{2^k}]$.
In turn, $\vect{c}_{k} \in C[\vect{c}_{k-1}; \frac{r}{2^{k-1}}] \subseteq B[\vect{c}_{k-1}; \frac{r}{2^{k-1}}]$, \textellipsis,
$\vect{c}_1 \in B[\vect{c}; r]$. Thus

\cenlin{$\rho(\vect{x}; \vect{c}) \lesq \rho(\vect{x}; \vect{c}_k) + \rho(\vect{c}_k; \vect{c}_{k-1}) + ...
+ \rho(\vect{c}_1; \vect{c}) \lesq \frac{r}{2^k} + \frac{r}{2^{k-1}} + ... + r \lesq 2r$}

{\ }

One general way to covershapes and corresponding coverands
that satisfy \assumpref{asmpCoverandFiniteCoverByHalf}--\assumpref{asmpCoverInCompact}
lies in considering any $S \subset X$ which is proper under the same metric $\rho$:
$\forall \vect{x} \in S$, $\forall r > 0$
$B_S[\vect{x}; r] := B[\vect{x}; r] \cap S$ is compact.
Then a closed ball in $S$ is a covershape in $X$, and for any given $\vect{x} \in S$, $r > 0$
$B_S[\vect{x}; r]$ is the coverand.
\assumpref{asmpCoverandFiniteCoverByHalf} holds because $\{ B(\vect{y}; \frac{r}{2}) \cap S \}_{\vect{y} \in B_S[\vect{x}; r]}$
is the open (in $S$) cover of $B_S[\vect{x}; r]$, which, due to properness of $S$, has a finite subcover
$\{ B(\vect{y}_i; \frac{r}{2}) \cap S \}_{i=1}^n$,
so much the more $\{ B_S[\vect{y}_i; \frac{r}{2}] \}_{i=1}^n$ covers $B_S[\vect{x}; r]$;
\assumpref{asmpCoverInCompact} holds too because $F_{\infty} \subseteq B_S[\vect{x}; 2r]$, which is compact.

\section{RC-algorithm}\label{secRCalgo}

\forceparindent
(RC is \textit{Refining Cover}.)
We assume that its ``executor'' calculates $D(\vect{x})$ at given $\vect{x}$,
builds a finite cover of $C[\vect{x}; r]$ from \assumpref{asmpCoverandFiniteCoverByHalf} etc.,
and completes these actions in a finite time.

{\ }

For the sake of simplicity, we add one more assumption, probably the most ``restrictive''
(and thus reducing the generality of our approach) one:

{\ }

\assump{asmpInitialCoverandKnown}. Some coverand $C[\vect{c}; r] \ni \vect{s}$ is known.

{\ }

Let $\delta > 0$ be any precision chosen in advance;
our goal is to obtain $\vect{x} \in X$ such that $\rho(\vect{x}; \vect{s}) < \delta$.

\noindent
\dotfill

\textbf{Step 0.} Let $k := 1$, $\mathfrak{C}_0 := \{ C[\vect{c}; r] \}$, and $r_0 := r$.
Also, choose $D = D_{\infty}$ or $D = D_1$.

{\ }

\textbf{Step 1.} Let $\mathfrak{C}_k := \nullset$.

For each coverand $C = C[\vect{y}; r_{k-1}] \in \mathfrak{C}_{k-1}$, where $r_{k-1} = \frac{r}{2^{k-1}}$,
by \assumpref{asmpCoverandFiniteCoverByHalf} there is the finite cover $\mathfrak{C}$ of $C$, which
consists of the coverands $C' = C[\vect{z}; r_k]$, where $\vect{z} \in C$ and $r_k = \frac{1}{2} r_{k-1} = \frac{r}{2^k}$.

Consider each $C'$ in turn and test it as the corresponding $B'=B[\vect{z}; r_k]$:
if $D(\vect{z}) \lesq 2r_k$, then add $C'$ to $\mathfrak{C}_k$,
that is, let $\mathfrak{C}_k := \mathfrak{C}_k \cup \{ C' \}$.

Since $\vect{s} \in \bigcup\limits_{C \in \mathfrak{C}_{k-1}} C$, at least 1 coverand $C'$ from these $\mathfrak{C}$
contains $\vect{s}$, thus by Prop. \ref{propBallTestReverse} $C'$ passes the test and appears in $\mathfrak{C}_k$.
Therefore, at the end of this step $\mathfrak{C}_k \ne \nullset$ and $\vect{s} \in \bigcup\limits_{C' \in \mathfrak{C}_k} C'$.

{\ }

\textbf{Step 2.} Let $\vect{c}_k$ be the anchor of the arbitrarily chosen coverand from $\mathfrak{C}_k$.

{\ }

\textbf{Step 3.} 
Let $d_k := r_k + \max\limits_{C[\vect{c}'; r_k] \in \mathfrak{C}_k} \rho(\vect{c}_k; \vect{c}')$. If $d_k < \delta$,
then let $\vect{x} := \vect{c}_k$ and halt; else let $k := k + 1$ and goto Step 1.

\noindent
\dotfill

\begin{proposition}
This algorithm halts after a finite number of iterations, at that $\rho(\vect{x}; \vect{s}) < \delta$.
\end{proposition}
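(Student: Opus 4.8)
The plan is to split the claim into two parts: (i) \emph{if} the algorithm halts, the returned point $\vect{x}$ satisfies $\rho(\vect{x};\vect{s})<\delta$, and (ii) the algorithm \emph{does} halt after finitely many iterations. Part (i) is a one-line triangle-inequality estimate; part (ii) is where the work lies, and it rests on the SDN property supplied by \assumpref{asmpCoverInCompact}.

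For (i): suppose the algorithm halts at iteration $k$. By the invariant proved in Step 1, $\vect{s}\in\bigcup_{C'\in\mathfrak{C}_k}C'$, so $\vect{s}\in C[\vect{c}';r_k]\subseteq B[\vect{c}';r_k]$ for some coverand of $\mathfrak{C}_k$, i.e. $\rho(\vect{s};\vect{c}')\lesq r_k$. Since $\vect{x}=\vect{c}_k$, the triangle inequality gives $\rho(\vect{x};\vect{s})\lesq\rho(\vect{c}_k;\vect{c}')+\rho(\vect{c}';\vect{s})\lesq\max_{C[\vect{c}'';r_k]\in\mathfrak{C}_k}\rho(\vect{c}_k;\vect{c}'')+r_k=d_k<\delta$. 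I would also record here that each $\mathfrak{C}_k$ is finite — $\mathfrak{C}_0$ is a singleton, and \assumpref{asmpCoverandFiniteCoverByHalf} turns a finite $\mathfrak{C}_{k-1}$ into a finite $\mathfrak{C}_k$ — so every single iteration is a finite computation.

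For (ii): first I would check, by induction on $k$, that $\bigcup_{C'\in\mathfrak{C}_k}C'\subseteq F_k$, where $F_k$ is built as in \assumpref{asmpCoverInCompact} from $F_0=C[\vect{c};r]$; indeed $\bigcup\mathfrak{C}_0=C[\vect{c};r]=F_0$, and each coverand of $\mathfrak{C}_k$ has the form $C[\vect{z};r_k]$ with $\vect{z}\in\bigcup\mathfrak{C}_{k-1}\subseteq F_{k-1}$, hence $C[\vect{z};r_k]\subseteq\bigcup_{\vect{x}\in F_{k-1}}C[\vect{x};r_k]=F_k$. Thus every anchor produced at any iteration lies in $F_\infty$, and $\vect{s}\in F_0\subseteq F_\infty$. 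Applying the SDN property of $F_\infty$ with precision $\tfrac{\delta}{4}$ yields $\veps:=\inf_{\vect{x}\in F_\infty\backsl B(\vect{s};\,\delta/4)}D(\vect{x})>0$, so $\vect{x}\in F_\infty$ together with $D(\vect{x})<\veps$ forces $\rho(\vect{x};\vect{s})<\tfrac{\delta}{4}$. Now choose $K$ with $2r_K<\veps$ and $r_K<\tfrac{\delta}{2}$ (possible since $r_K=r/2^K\to0$). If the algorithm has not already halted, then at iteration $K$ every coverand $C[\vect{c}';r_K]\in\mathfrak{C}_K$ passed the test $D(\vect{c}')\lesq2r_K<\veps$ and has $\vect{c}'\in F_\infty$, so $\rho(\vect{c}';\vect{s})<\tfrac{\delta}{4}$, and likewise $\rho(\vect{c}_K;\vect{s})<\tfrac{\delta}{4}$; hence $\rho(\vect{c}_K;\vect{c}')<\tfrac{\delta}{2}$, so $d_K=r_K+\max_{C[\vect{c}';r_K]\in\mathfrak{C}_K}\rho(\vect{c}_K;\vect{c}')<r_K+\tfrac{\delta}{2}<\delta$ and the algorithm halts at iteration $K$. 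Either way it stops after at most $K$ iterations.

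The main obstacle is part (ii), and within it the decisive point is that \assumpref{asmpCoverInCompact} must be applied to the global set $F_\infty$, not to any individual coverand: the arbitrarily chosen anchor $\vect{c}_k$ need not belong to a coverand containing $\vect{s}$, and in a general non-proper space there is no relative compactness to fall back on, so only the SDN property of $F_\infty$ forces the anchors eventually close to $\vect{s}$. The remainder is the quantitative bookkeeping of feeding $\delta/4$ into SDN so that the residual $r_K$-term and the factor-of-two loss in the triangle inequality still sum to less than $\delta$.
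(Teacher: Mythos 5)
Your proposal is correct and follows essentially the same route as the paper: invoke the SDN property of $F_\infty$ (from \assumpref{asmpCoverInCompact}) with precision $\delta/4$ to get $\veps>0$, note that once $r_k<\frac{1}{2}\veps$ and $r_k<\frac{1}{2}\delta$ all surviving anchors are within $\delta/4$ of $\vect{s}$ so $d_k<\delta$ and the algorithm halts, and then use the invariant $\vect{s}\in\bigcup_{C'\in\mathfrak{C}_k}C'$ with the triangle inequality to get $\rho(\vect{x};\vect{s})\lesq d_k<\delta$ at the halting step. Your explicit induction showing $\bigcup_{C'\in\mathfrak{C}_k}C'\subseteq F_k$ and the remark on finiteness of each $\mathfrak{C}_k$ are just worked-out details of what the paper asserts directly from \assumpref{asmpCoverandFiniteCoverByHalf}--\assumpref{asmpCoverInCompact}.
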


\begin{proof}
By \assumpref{asmpCoverInCompact}, $\bigcup\limits_{\mathfrak{C}_k} \bigcup\limits_{C \in \mathfrak{C}_k} C \subseteq F_{\infty}$,
which has SDN property. Hence $\exists \veps > 0$:
$\vect{z} \in F_{\infty}$, $D(\vect{z}) < \veps$ imply $\rho(\vect{z}; \vect{s}) < \frac{1}{4} \delta$.
When $r_k = \frac{r}{2^k} < \frac{1}{2}\veps$ $\LRarr$ $k > \log_2 \frac{2r}{\veps}$,
for the coverands $C[\vect{c}_k; r_k]$ and any $C[\vect{c}'; r_k]$ to be in $\mathfrak{C}_k$ it is necessary that
$D(\vect{c}_k), D(\vect{c}') \lesq 2 r_k < \veps$, thus
$\rho(\vect{c}_k; \vect{c}') \lesq \rho(\vect{c}_k; \vect{s}) + \rho(\vect{s}; \vect{c}') < \frac{1}{2} \delta$.
Then $\max\limits_{C[\vect{c}'; r_k] \in \mathfrak{C}_k} \rho(\vect{c}_k; \vect{c}') < \frac{1}{2} \delta$ too.

As soon as we reach $k$ such that $r_k < \frac{1}{2} \veps$ and $r_k < \frac{1}{2} \delta$
(the latter holds when $k > \log_2 \frac{2r}{\delta}$), we have $d_k < \frac{1}{2} \delta + \frac{1}{2} \delta = \delta$,
and the algorithm halts with $\vect{x} = \vect{c}_k$, where $C[\vect{c}_k; r_k] \in \mathfrak{C}_k$.

Of course, $d_k < \delta$ may become true for $k$ even smaller than
$\max \{ \log_2 \frac{2r}{\veps}; \log_2 \frac{2r}{\delta} \}$.

Suppose $\vect{s} \in C[\vect{c}'; r_k] \in \mathfrak{C}_k$, then
$\rho(\vect{x}; \vect{s}) \lesq \rho(\vect{c}_k; \vect{c}') + \rho(\vect{c}'; \vect{s}) \lesq d_k < \delta$.
\end{proof}

If we replace Step 3 by

\cenlin{\textbf{Step 3'.} Let $k := k+1$, goto Step 1.}

\noindent
then we get the infinite sequence of $\vect{c}_k \xrarr{k \rarr \infty}{} \vect{s}$.
Indeed, $\forall \delta > 0$ the same reasonings provide $\exists \veps > 0$: $\vect{z} \in F_{\infty}$,
$D(\vect{z}) < \veps$ $\Rarr$
$\rho(\vect{z}; \vect{s}) < \delta$. Then $\forall k \greq k_0$, where $r_{k_0} < \frac{1}{2}\veps$
(e.g. $k_0 = \max \bigl\{ 1; \lfloor \log_2 \frac{2r}{\veps} \rfloor + 1 \bigr\}$),
we have $C[\vect{c}_k; r_k] \in \mathfrak{C}_k$, thus
$D(\vect{c}_k) \lesq 2r_k \lesq 2r_{k_0} < \veps$, so $\rho(\vect{c}_k; \vect{s}) < \delta$.

{\ }

\textbf{Remark.}
One can ``weaken'' \assumpref{asmpCoverandFiniteCoverByHalf} to \assumpref{asmpCoverandFiniteCoverByHalf}$\aleph$,
which requires the cover of $C[\vect{c}; r]$ to consist of no more than $\aleph \greq \aleph_0$
coverands $C[\vect{c}_j; \frac{r}{2}]$, $\vect{c}_j \in C[\vect{c}; r]$,
but then one has to ``strengthen'' the algorithm's executor accordingly,
so that it is able to build such cover and test $\aleph$ coverands in a finite time
(also, $d_k := r_k + \sup\limits_{C[\vect{c}'; r_k] \in \mathfrak{C}_k} \rho(\vect{c}_k; \vect{c}')$).
In case $\card \, X = \aleph$ (or even $\card A = \aleph$, $\ovline{A} = X$) it seems easier for the executor
to verify all $\vect{x} \in X$ for being $\vect{s}$ in a more direct way.

{\ }

As the next section illustrates, in certain spaces,
when $\{ \vect{r}_i \}$ are at specific positions and $\{ t_i \}$ take specific values,
there are ``better''/faster methods to approximate $\vect{s}$ or even obtain it exactly.

\section{Dense sensors and normed spaces}\label{secDenseOnSphereNormSpace}

\forceparindent
Here we consider $R$ consisting of ``much more'' sensors, --- in terms of density in $X$ rather than in terms of cardinality.
On the other hand, the components of the algorithm described above, --- the refining cover and the defect, ---
if needed, become much simpler.

We recall that the set $A \subseteq X$ is called \textit{dense} in the set $B \subseteq X$ if $\ovline{A} \supseteq B$.
In particular, when $\ovline{A} = X$, $A$ is \textit{everywhere dense}.

{\ }

Suppose $R \subset \ovline{R}$. We can assume that the set of sensors is $\ovline{R}$ from the start, because
$\forall \vect{r} \in \ovline{R}$ $t_{\vect{r}} = t_0 + \rho(\vect{s}; \vect{r}) =
\lim\limits_{j \rarr \infty} t_{i_j} = t_0 + \lim\limits_{j \rarr \infty} \rho(\vect{s}; \vect{r}_{i_j})$
for $\forall \vect{r}_{i_j} \xrarr{j \rarr \infty}{} \vect{r}$, $\vect{r}_{i_j} \in R$,
due to continuity of metric; that is, the original sensors uniquely define the moments
when the wave reaches new sensors from the closure.
From now on, $\ovline{R} = R$, or, equivalently, $R$ is closed.

{\ }

The easiest case is when we know that $\vect{s} \in R$: $t_{\vect{s}} = t_0$,
while $\forall \vect{r} \in R$, $\vect{r} \ne \vect{s}$:
$t_{\vect{r}} = t_0 + \rho(\vect{s}; \vect{r}) > t_0$, so $t_{\vect{s}} = \inf\limits_{\vect{r} \in R} t_{\vect{r}}$.
In other words, the solution then is the sensor where $t_{\vect{r}}$ attains its infimum.
In general case, $\vect{a} \in R$ such that $t_{\vect{a}} = \inf\limits_{\vect{r} \in R} t_{\vect{r}} =
t_0 + \inf\limits_{\vect{r} \in R} \rho(\vect{s}; \vect{r})$ is the \textit{best approximant} (BA) of $\vect{s}$ in $R$.

{\ }

SRP is simplified when $R$ is ``complex'' enough to ``get'' the ``shape'' of expanding wave
at some moment(s), and from that shape, in turn, derive the position of the source.
In this section we consider spherical sensor-sets in normed spaces.

{\ }

Precisely, hereinafter in this section

$\clubsuit$ $(X; \rho)$ is a normed space $(X; \| \cdot \|)$ with a strictly convex norm, and $\dim X \greq 2$;

$\clubsuit$ $R = S[\theta; 1]$.

(Also, we assume that we can determine, in a finite time, $\vect{r} \in R$ where $t_{\vect{r}}$ attains its $\inf$ or $\sup$.)

{\ }

\textbf{Case $\| \vect{s} \| < 1$.} Consider $\vect{s} \ne \theta$.

$\forall \vect{x} \in X$, $\| \vect{x} - \vect{s} \| < 1 - \| \vect{s} \|$: $\| \vect{x} \| = \| (\vect{x} - \vect{s}) + \vect{s} \| <
1 - \| s \| + \| s \| = 1$, while for $\vect{b} = \vect{s} / \| \vect{s} \|$: $\| \vect{b} \| = 1$
and $\| \vect{b} - \vect{s} \| = \bigl| 1/\| \vect{s} \| - 1 \bigr| \cdot \| \vect{s} \| = 1 - \| \vect{s} \|$.
Thus $\vect{b}$ is BA of $\vect{s}$ in $R$, and $\rho(\vect{s}; R) = 1 - \| \vect{s} \|$.

Suppose $\vect{u}$ is BA of $\vect{s}$ in $R$.
Then $\| \vect{u} - \vect{s} \| = 1 - \| \vect{s} \|$ and $\| \vect{u} \| = 1$. We have
$\| (\vect{u} - \vect{s}) + \vect{s} \| = \| \vect{u} \| = 1 = 1 - \| \vect{s} \| + \| \vect{s} \| = \| \vect{u} - \vect{s} \| +
\| \vect{s} \|$, hence the strict convexity of $\| \cdot \|$ implies $\vect{u} - \vect{s} = \lambda \vect{s}$
$\LRarr$ $\vect{u} = (1 + \lambda) \vect{s}$, $\lambda \greq 0$. Since $1 = \| \vect{u} \| = (1 + \lambda) \| \vect{s} \|$,
$\vect{u} = \vect{s} / \| \vect{s} \| = \vect{b}$, --- BA of $\vect{s}$ in $R$ is unique.

Now, $\forall \vect{u} \in R$: $\| \vect{u} - \vect{s} \| \lesq \| \vect{u} \| + \| \vect{s} \| = 1 + \| \vect{s} \|$,
while for $\vect{w} = - \vect{s} / \| \vect{s} \|$: $\| \vect{w} - \vect{s} \| =
\bigl| 1 / \| \vect{s} \| + 1 \bigr| \cdot \| \vect{s} \| = 1 + \| \vect{s} \|$.
Thus $\vect{w}$ is the \textit{worst approximant} (WA) of $\vect{s}$ in $R$:
$\rho(\vect{s}; \vect{w}) = \sup\limits_{\vect{r} \in R} \rho(\vect{s}; \vect{r})$.

Analogously, if $\vect{u}$ is WA of $\vect{s}$ in $R$, then $\| \vect{u} - \vect{s} \| = 1 + \| \vect{s} \| =
\| \vect{u} \| + \| - \vect{s} \|$, so $\vect{u} = \lambda (- \vect{s})$,
at that $\lambda \greq 0$; $1 = \| \vect{u} \| = \lambda \| \vect{s} \|$ $\Rarr$ $\vect{u} = - \vect{s} / \| \vect{s} \| =
\vect{w}$, --- WA is unique as well.

Let $t_b = \inf\limits_{\vect{r} \in R} t_\vect{r}$ and $t_w = \sup\limits_{\vect{r} \in R} t_{\vect{r}}$.
We see that $t_b = t_0 + 1 - \| \vect{s} \|$ is attained only at $\vect{b}$
and $t_w = t_0 + 1 + \| \vect{s} \|$ is attained only at $\vect{w}$.
Hence $t_w - t_b = 2 \| \vect{s} \|$, implying $\vect{s} = \frac{1}{2} (t_w - t_b) \vect{b}$.

This method of obtaining $\vect{s}$ works for $\vect{s} = \theta$ too, when $t_{\vect{r}} \equiv t_0 + 1$ $\Rarr$ $t_w - t_b = 0$.

{\ }

\textbf{Case $\| \vect{s} \| \greq 1$.}
It is an easy exercise to show that $\vect{b} = \vect{s} / \| \vect{s} \|$ is the unique BA of $\vect{s}$ in $R$
and $\vect{w} = -\vect{b}$ is the unique WA of $\vect{s}$ in $R$ again.
However, $t_b = \inf\limits_{\vect{r} \in R} t_{\vect{r}} = t_0 + \| \vect{s} \| - 1$ (attained at $\vect{b}$)
and $t_w = \sup\limits_{\vect{r} \in R} t_{\vect{r}} = t_0 + \| \vect{s} \| + 1$ (attained at $\vect{w}$),
which isn't enough to determine $\| \vect{s} \|$.

Since $\dim X \greq 2$, $\exists \vect{r} \in R$: $\vect{r} \ne \vect{b}$ and $\vect{r} \ne \vect{w}$,
with corresponding $t_{\vect{r}} = t_0 + \| \vect{r} - \vect{s} \|$.
We claim that $(\vect{b}; t_b)$, $(\vect{w}; t_w)$, and $(\vect{r}; t_{\vect{r}})$ determine $\vect{s} = d \vect{b}$ uniquely on
the ray $L = \{ d\vect{b} \mid d \greq 1 \}$.

Indeed, $d_1 = \| \vect{s} \|$ satisfies all 3 equations. Assume that there is another solution $d_2 \greq 1$, $d_2 \ne d_1$.
$t_b + t_w = 2(t_0 + \| \vect{s} \|)$, so $t_0 = \frac{1}{2}(t_b + t_w) - \| \vect{s} \|$. We rewrite
$t_{\vect{r}} = \frac{1}{2}(t_b + t_w) - d + \| \vect{r} - d \vect{b} \|$, or
$\| \vect{r} - d \vect{b} \| - d = t_{\vect{r}} - \frac{1}{2}(t_b + t_w)$. Then by assumption

\cenlin{$\| \vect{r} - d_1 \vect{b} \| - d_1 = \| \vect{r} - d_2 \vect{b} \| - d_2$ $\Rarr$
$\bigl| \, \| \vect{r} - d_1 \vect{b} \| - \| \vect{r} - d_2 \vect{b} \| \, \bigr| = |d_1 - d_2| =
\| (d_1 - d_2) \vect{b} \|$}

a) If $\| \vect{r} - d_1 \vect{b} \| - \| \vect{r} - d_2 \vect{b} \| = \| (d_1 - d_2) \vect{b} \|$,
then from $\| \vect{r} - d_1 \vect{b} \| = \| \vect{r} - d_2 \vect{b} \| + \| (d_2 - d_1) \vect{b} \|$
and strict convexity of $\| \cdot \|$ it follows that $\vect{r} - d_2 \vect{b} = \lambda (d_2 - d_1) \vect{b}$,
hence $\vect{r} = \gamma \vect{b}$, which is impossible, because then $\| \vect{r} \| = \| \vect{b} \| = 1$
would require $|\gamma| = 1$ and we would obtain $\vect{r} = \pm \vect{b}$ --- a contradiction.

b) If $\| \vect{r} - d_1 \vect{b} \| - \| \vect{r} - d_2 \vect{b} \| = - \| (d_1 - d_2) \vect{b} \|$,
then $\| \vect{r} - d_2 \vect{b} \| = \| \vect{r} - d_1 \vect{b} \| + \| (d_1 - d_2) \vect{b} \|$
likewise implies a contradiction.

Thus the SRP in $L$ defined by $(\vect{b}; t_b)$, $(\vect{w}; t_w)$, $(\vect{r}; t_{\vect{r}})$ satisfies
\assumpref{asmpSensBound} and \assumpref{asmpSolUnique} (note that $\vect{w}, \vect{r} \notin L$).
We approximate $\vect{s}$ to arbitrary precision using the RC-algorithm, and for that we need

1) The defect $D(\vect{x}) = D_{\infty}(\vect{x})$: since $\tau_{\vect{b}}(\vect{x}) = t_b - \| \vect{x} - \vect{b} \| =
t_0 + \| \vect{s} \| - \| \vect{x} \| = t_w - \| \vect{x} - \vect{w} \| = \tau_{\vect{w}}(\vect{x})$,
we have $D(\vect{x}) = \bigl| \tau_{\vect{r}}(\vect{x}) - \tau_{\vect{b}}(\vect{x}) \bigr| =
\bigl| t_{\vect{r}} - t_b - \| \vect{x} - \vect{r} \| + \| \vect{x} \| - 1  \bigr|$;

2) Covershape is a closed segment on the ray $L$, and the coverand $C[\vect{c}; r] = \{ \vect{c} + u \vect{b} \colon |u| \lesq r \}$,
where $\vect{c} \in L$. Moreover, we can assume that an upper estimate of $\| \vect{s} \|$ is known, $\| \vect{s} \| \lesq M+1$,
and consider only $K = L \cap B[\theta; M+1] \ni \vect{s}$, which is compact.
Then $\forall k \in \mathbb{Z}_+$ the coverands are given explicitly
as $C_{k,i} := \bigl\{ (1 + u) \vect{b} \mid u \in [M\frac{i}{2^k}; M\frac{i+1}{2^k}] \bigr\} \subseteq K$, $i = \ovline{0, 2^k-1}$,
at that $C_{k,i} = C_{k+1,2i} \cup C_{k+1, 2i+1}$. In other words,
\assumpref{asmpCoverandFiniteCoverByHalf}, \assumpref{asmpCoverInCompact}, and \assumpref{asmpInitialCoverandKnown} hold.

Next, we choose small enough $\delta > 0$, run the algorithm,
and obtain $\vect{x} \in K$, $\| \vect{x} - \vect{s} \| < \delta$.

{\ }

These cases are distinguished by $t_w - t_b$, which is $2 \| \vect{s} \| < 2$ when $\| \vect{s} \| < 1$,
and $2$ when $\| \vect{s} \| \greq 1$.

{\ }

\textbf{SDN without relative compactness}. We claim that $A = B[\theta; 1]$ has SDN property.
Indeed, let $\vect{s} \in A$; consider $\delta > 0$ and $\vect{x} \in A$ such that $\| \vect{x} - \vect{s} \| \greq \delta$.
The ``straight line'' $L = \{ \vect{s} + d \vect{v} \mid d \in \mathbb{R} \}$,
where $\vect{v} = (\vect{x} - \vect{s}) / \| \vect{x} - \vect{s} \|$,
intersects $R$ at 2 points: $\vect{u}_+$ for $d = d_+ \greq \| \vect{x} - \vect{s} \|$, and $\vect{u}_-$ for $d = d_- \lesq 0$
(it follows from continuity of $f(d) = \| \vect{s} + d \vect{v} \|$, $f(0) = \| \vect{s} \| \lesq 1$,
$f(d) \greq \bigl| |d| - \| \vect{s} \| \bigr| > 1$ when $|d| > \| \vect{s} \| + 1$, and strict convexity of $\| \cdot \|$).

\cenlin{$D_{\infty}(\vect{x}) \greq \bigl| \tau_{\vect{u}_+}(\vect{x}) - \tau_{\vect{u}_-}(\vect{x}) \bigr| =
\Bigl| \bigl[ \| \vect{s} - \vect{u}_+ \| - \| \vect{s} - \vect{u}_- \| \bigr] -
\bigl[ \| \vect{x} - \vect{u}_+ \| - \| \vect{x} - \vect{u}_- \| \bigr] \Bigr| =$}

\cenlin{$ = \Bigl| \bigl[ d_+ - (-d_-) \bigr] -
\bigl[ (d_+ - \| \vect{x} - \vect{s} \|) -  (\| \vect{x} - \vect{s} \| - d_-) \bigr] \Bigr| =
2 \| \vect{x} - \vect{s} \| \greq 2 \delta$}

\noindent
thus $\inf\limits_{\vect{x} \in A \backsl B(\vect{s}; \delta)} D_{\infty} (\vect{x}) \greq 2 \delta$.
If $\dim X = \infty$, $A$ is not relatively compact (\cite[8.30, 8.28]{giles1987}).

\section{Refining $\veps$-neighborhood-covers of compact sets}

\forceparindent
In this section we keep \assumpref{asmpSensBound}, \assumpref{asmpSolUnique},
but discard \assumpref{asmpCoverandFiniteCoverByHalf}, \assumpref{asmpCoverInCompact}, \assumpref{asmpInitialCoverandKnown},
and replace them by

\resetAssumpCount{2}

{\ }

\assump{asmpApproxCompacts}.
There is a known $\{ K_n \}_{n \in \mathbb{N}}$, $K_n \subseteq X$, such that $K_n$ are compact and
$\rho(\vect{s}; K_n) \xrarr{n \rarr \infty}{} 0$.

{\ }

\assump{asmpApproxCompactsInSDN}.
$\bigcup\limits_{n \in \mathbb{N}} K_n \subseteq A \ni \vect{s}$, $\diam A \lesq r$,
where $A$ has SDN property and $r > 0$ is known.

{\ }

For example, let $(X; \rho) = l_p$, $1 \lesq p < \infty$, and $\| \vect{s} \| \lesq M$, at that $M$ is known.
Let the finite-dimensional subspaces
$L_n := \bigl\{ (x_1; ...; x_n; 0; 0; ...) \mid x_j \in \mathbb{R}, j = \ovline{1, n} \bigr\}$, $n \in \mathbb{N}$.
It is well known that
$\vect{s}_n = (s_1; ...; s_n; 0; 0; ...)$ is BA of $\vect{s}$ in $L_n$,
$\rho(\vect{s}; L_n) = \inf\limits_{\vect{u} \in L_n} \rho(\vect{s}; \vect{u}) = \rho(\vect{s}; \vect{s}_n) = \bigl( \sum\limits_{j=n+1}^{\infty} |s_j|^p \bigr)^{\frac{1}{p}}
\xrarr{n \rarr \infty}{} 0$, and $\| \vect{s}_n \| \lesq \| \vect{s} \| \lesq M$.
If the arrangement of the sensors $R$ provides SDN property of $B[\theta; M]$, then
$K_n = B[\theta; M] \cap L_n$ satisfy \assumpref{asmpApproxCompacts} and \assumpref{asmpApproxCompactsInSDN}
with $A = B[\theta; M]$, $r = 2M$.

\begin{definition}
Let $A \subseteq X$ and $\veps > 0$.
$\{ C_j \}_{j \in J}$, where $C_j \subseteq X$, is called an
\emph{$\veps$-neighborhood-cover} of $A$ if $\forall \vect{x} \in X$, $\rho(\vect{x}; A) \lesq \veps$:
$\exists j \in J$: $\vect{x} \in C_j$.
\end{definition}

In other words, $\bigl( \bigcup\limits_{\vect{x} \in A} B[\vect{x}; \veps] \bigr) \cup
\{ \vect{x} \in X \mid \rho(\vect{x}; A) = \veps \} \subseteq \bigcup\limits_{j \in J} C_j$.

\begin{proposition}\label{propNeighCoverCompact}
For any compact $K \subseteq X$ and any $\veps > 0$
there exists a finite $\veps$-neighborhood-cover $\bigl\{ B[\vect{c}_j; 2\veps] \bigr\}_{j=1}^m$
of $K$, at that $\vect{c}_j \in K$.
\end{proposition}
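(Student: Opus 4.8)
The plan is to use compactness of $K$ directly via the open-cover characterization. First I would fix $\veps > 0$ and cover $K$ by the open balls $B(\vect{x}; \veps)$ for $\vect{x} \in K$; since $K$ is compact, finitely many of them, say $B(\vect{c}_1; \veps), \ldots, B(\vect{c}_m; \veps)$ with $\vect{c}_j \in K$, already cover $K$. I claim the closed balls $B[\vect{c}_j; 2\veps]$ form an $\veps$-neighborhood-cover of $K$.

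To verify the claim, take any $\vect{x} \in X$ with $\rho(\vect{x}; K) \lesq \veps$. Then there is a point $\vect{y} \in K$ with $\rho(\vect{x}; \vect{y}) \lesq \veps$ — here I should be slightly careful, since the infimum $\rho(\vect{x}; K)$ need not be attained in a general (non-proper) metric space, but I can pick $\vect{y} \in K$ with $\rho(\vect{x}; \vect{y}) < \rho(\vect{x}; K) + \veps$, which can be arranged to be $\lesq 2\veps$; alternatively, to keep the clean constant, note that for any $\eta > 0$ there is $\vect{y} \in K$ with $\rho(\vect{x}; \vect{y}) < \veps + \eta$, and $\vect{y}$ lies in some $B(\vect{c}_j; \veps)$, so $\rho(\vect{x}; \vect{c}_j) \lesq \rho(\vect{x}; \vect{y}) + \rho(\vect{y}; \vect{c}_j) < 2\veps + \eta$. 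Since $\eta > 0$ is arbitrary and we are comparing against the closed ball of radius $2\veps$, letting $\eta \to 0$ gives $\rho(\vect{x}; \vect{c}_j) \lesq 2\veps$, i.e. $\vect{x} \in B[\vect{c}_j; 2\veps]$. This handles both $\rho(\vect{x}; K) < \veps$ and $\rho(\vect{x}; K) = \veps$ uniformly.

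The only real subtlety — and it is minor — is the non-attainment of the distance to $K$ in a non-proper space, which is why I route the estimate through an auxiliary $\eta$ rather than a minimizing point; since $K$ is compact the infimum actually is attained, so one could also just invoke that, but the $\eta$-argument needs no such remark. Everything else is a two-term triangle inequality, so there is essentially no hard part here; this proposition is a packaging lemma feeding the algorithm of the next section.
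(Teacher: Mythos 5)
Your proposal is correct and takes essentially the paper's route: both arguments produce finitely many centers $\vect{c}_j \in K$ within $\veps$ of every point of $K$ (you via a finite subcover of the open $\veps$-balls, the paper via a finite $\veps$-net from total boundedness) and conclude with the same two-term triangle inequality giving $\rho(\vect{x}; \vect{c}_j) \lesq 2\veps$. The only divergence is cosmetic: the paper takes an actual nearest point of $K$ via Weierstrass, while your $\eta$-argument avoids attainment; just note explicitly that the index $j$ may depend on $\eta$, so passing to $\rho(\vect{x}; \vect{c}_j) \lesq 2\veps$ uses finiteness of the cover (some single $j$ works for a sequence $\eta_n \rarr 0$), after which everything is fine.
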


\begin{proof}
$K$ is \textit{totally bounded} (\cite[8.28]{giles1987}), so there is a finite \textit{$\veps$-net} (\cite[8.24]{giles1987})
$\{ \vect{c}_j \}_{j=1}^m \subseteq K$ for $K$.
Then $\bigl\{ B[\vect{c}_j; 2\veps] \bigr\}_{j=1}^m$ is a sought cover:
indeed, $\forall \vect{x} \in X$ such that $\rho(\vect{x}; K) \lesq \veps$, by Weierstrass theorem,
the continuous $f(\vect{u}) = \rho(\vect{x}; \vect{u})$
attains its infimum on $K$ at some $\vect{u}$, $\rho(\vect{x}; \vect{u}) = \rho(\vect{x}; K)$,
and by definition of $\veps$-net $\exists \vect{c}_j$:
$\rho(\vect{u}; \vect{c}_j) \lesq \veps$. Hence
$\rho(\vect{x}; \vect{c}_j) \lesq \rho(\vect{x}; \vect{u}) + \rho(\vect{u}; \vect{c}_j) \lesq 2\veps$.
\end{proof}

Our goal here is to obtain (or rather ``construct'') $\{ \vect{x}_n \}_{n \in \mathbb{N}} \subseteq X$:
$\vect{x}_n \xrarr{n \rarr \infty}{} \vect{s}$.

{\ }

Take $\forall n \in \mathbb{N}$ and consider the sequence of $\frac{r}{2^k}$-neighborhood-covers of $K_n$
from Prop. \ref{propNeighCoverCompact}: $\forall k \in \mathbb{N}$

\cenlin{$\mathfrak{C}_{n, k} = \bigl\{ B[\vect{c}^{(n)}_{k,j}; \frac{r}{2^{k-1}}] \bigr\}_{j=1}^{m_{n, k}}$,
$\vect{c}^{(n)}_{k, j} \in K_n$}

Let $r_k = \frac{r}{2^{k-1}}$. The balls from $\mathfrak{C}_{n, k}$ that pass the test are the elements of

\cenlin{$\mathfrak{S}_{n, k} = \bigl\{ B[\vect{c}; r_k] \in \mathfrak{C}_{n, k} \mid D(\vect{c}) \lesq 2r_k \bigr\}$}

$\diam A \lesq r$ $\Rarr$ $\forall \vect{x} \in A$ $\vect{s} \in B[\vect{x}; r]$,
and Prop. \ref{propBallTestReverse} implies $\mathfrak{S}_{n, 1} = \mathfrak{C}_{n, 1}$.

As for $k > 1$, there are 2 alternatives:

1) if $\vect{s} \in K_n$, then $\forall k \in \mathbb{N}$ $\exists B[\vect{c}; r_k] \in \mathfrak{C}_{n, k}$:
$\vect{s} \in B[\vect{c}; r_k]$
$\stackrel{\mathrm{Prop. \ref{propBallTestReverse}}}{\Rarr}$ $B[\vect{c}; r_k] \in \mathfrak{S}_{n, k} \ne \nullset$.

2) if $\vect{s} \notin K_n$, then $\rho(\vect{s}; K_n) > 0$ due to compactness, thus by SDN
$\veps = \inf\limits_{\vect{x} \in K_n} D(\vect{x}) > 0$.
As soon as $k$ is big enough so that $r_k < \veps/2$, $\forall \vect{x} \in K_n$ $D(\vect{x}) \greq \veps > 2r_k$,
and by Cor. \ref{corBallNegTest} $\mathfrak{S}_{n, k} = \nullset$.

Let $\mu_n = \min \{ k \in \mathbb{N} \mid \mathfrak{S}_{n, k+1} = \nullset \}$
($\mu_n = \infty$ if $\vect{s} \in K_n$), then $\forall k < \mu_n + 1$: $\mathfrak{S}_{n, k} \ne \nullset$.
$\mu_n \greq 1$.

Let $\nu = \min \{ n \in \mathbb{N} \mid \mu_n = \infty \}$
($\nu = \infty$ if $\vect{s} \notin \bigcup\limits_{n \in \mathbb{N}} K_n$), then $\forall n < \nu$: $\mu_n < \infty$.

{\ }

Now we proceed to the algorithm that defines $\{ \vect{x}_n \}_{n \in \mathbb{N}}$:

\noindent
\dotfill

Let $n = 1$. Build $\mathfrak{C}_{1, k}$, $\mathfrak{S}_{1, k}$,
let $\vect{x}_k$ be the center of the arbitrarily chosen ball from $\mathfrak{S}_{1, k}$,
consecutively for $k = 1, 2, \ldots$,
until $\mathfrak{S}_{1, k} = \nullset$ ($k = \mu_1 + 1$ then) or, if that never happens, eternally (when $\mu_1 = \infty$).

\textellipsis

If $\mu_n < \infty$, let $\vect{x}_n := \vect{x}_{n - 1 + \mu_n}$
(that is, replace it by the center of some ball from $\mathfrak{S}_{n, \mu_n}$, part
of the ``finest'' cover of $K_n$ preceding the cover that fails the test entirely) and
discard $\vect{x}_{n+1}$, $\vect{x}_{n+2}$, \textellipsis, $\vect{x}_{n - 1 + \mu_n}$.
Increment $n := n+1$.
Build $\mathfrak{C}_{n, k}$, $\mathfrak{S}_{n, k}$,
let $\vect{x}_{n - 1 + k}$ be the center of the arbitrarily chosen ball from $\mathfrak{S}_{n, k}$,
for $k = 1, 2, \ldots$, until $\mathfrak{S}_{n, k} = \nullset$ ($k = \mu_n + 1$) or eternally ($\mu_n = \infty$).

\textellipsis

\noindent
\dotfill

This algorithm never halts.
One the one hand, while it runs, however long the span $\vect{x}_n$, \textellipsis, $\vect{x}_{n-1+k}$
is at some $(n; k)$-step, at the next step this span can be almost completely erased, leaving $\vect{x}_{n-1+\mu_n}$
in place of $\vect{x}_n$. On the other hand, for a given $j \in \mathbb{N}$ $\vect{x}_j$
has only a finite number of changes (becomes undefined or gets new value).

Eventually this algorithm comes into one of the following mutually exclusive eternal loops:

{\ }

\textbf{Case $\nu = \infty$}. By \assumpref{asmpApproxCompactsInSDN} $\forall \delta > 0$ $\exists \veps > 0$:
if $\vect{x} \in A$ and $D(\vect{x}) < \veps$, then $\rho(\vect{x}; \vect{s}) < \delta$.

$\exists m \in \mathbb{N}$: $\frac{1}{2^m} \veps \lesq r$
(e.g. $m = \max \bigl\{ 1; \lceil \log_2 \frac{\veps}{r} \rceil \bigr\}$).

By \assumpref{asmpApproxCompacts} $\exists N \in \mathbb{N}$: $\forall n \greq N$ $\rho(\vect{s}; K_n) < \frac{1}{2^{m+2}} \veps$.
Consider any such $n$.

It is easy to see that $\exists k \in \mathbb{N}$:
$r_{k} \in [\frac{1}{2^{m+1}} \veps; \frac{1}{2^m} \veps )$.
Indeed, $\log_2 r_1 = \log_2 r \greq \log_2 \veps - m$, therefore for some $k$:
$\log_2 r_k = \log_2 r - (k - 1) \in [\log_2 \veps - m - 1; \log_2 \veps - m ) =
\bigl[ \log_2 (\frac{1}{2^{m+1}} \veps); \log_2 (\frac{1}{2^m} \veps) \bigr)$.

Construction of $\mathfrak{C}_{n,k}$ implies that
$\exists B[\vect{c}; r_k] \in \mathfrak{C}_{n, k}$: $\vect{s} \in B[\vect{c}; r_k]$ ($r_k > 2 \rho(\vect{s}; K_n)$),
and it follows from Prop. \ref{propBallTestReverse} that $B[\vect{c}; r_k] \in \mathfrak{S}_{n, k} \ne \nullset$.
Moreover, $\forall k' < k$: $r_{k'} > r_k$ $\Rarr$ $\mathfrak{S}_{n, k'} \ne \nullset$.
Thus $\mu_n \greq k$.

Since $\vect{x}_n$ is the center of some ball from $\mathfrak{S}_{n, \mu_n}$, we have
$D(\vect{x}_n) \lesq 2 r_{\mu_n} \lesq 2 r_k < \frac{1}{2^{m-1}} \veps \lesq \veps$,
so $\rho(\vect{x}_n; \vect{s}) < \delta$ for $n \greq N$.

{\ }

\textbf{Case $\mu_{\nu} = \infty$}. For the sake of simplicity we
denote $\vect{x}_n := \vect{x}_{\nu - 1 + n} \in K_{\nu}$, $n \in \mathbb{N}$, --- that is,
we skip $\vect{x}_1, \ldots, \vect{x}_{\nu - 1}$.
The reasonings as in Section \ref{secRCalgo} then follow:
due to \assumpref{asmpApproxCompactsInSDN}, $\forall \delta > 0$ $\exists \veps > 0$:
if $\vect{x} \in K_{\nu}$ and $D(\vect{x}) < \veps$, then $\rho(\vect{x}; \vect{s}) < \delta$.
Let $N = \max \bigl\{ 1; \lfloor \log_2 \frac{2r}{\veps} \rfloor + 2 \bigr\}$.
$\forall n \greq N$: $r_n < \frac{1}{2} \veps$,
and $B[\vect{x}_n; r_n] \in \mathfrak{S}_{\nu, n}$ $\Rarr$ $D(\vect{x}_n) \lesq 2 r_n < \veps$ $\Rarr$
$\rho(\vect{x}_n; \vect{s}) < \delta$.

{\ }

Either way, $\vect{x}_n \xrarr{n \rarr \infty}{} \vect{s}$.

\newpage

\section*{Appendix}

\forceparindent
The ancillary files to this paper are the implementation of the RC-algorithm from Section \ref{secRCalgo}
for $\mathbb{R}^m_p$, $1 \lesq p < \infty$ (it is proper,
but can be viewed as a sub/superspace of some non-proper space), in Julia language [\texttt{https://julialang.org}].

{\ }

\cenlin{\small \texttt{\begin{tabular}{|p{4.5cm}|p{1cm}|p{3.5cm}|p{3.5cm}|}
\hline
FILE & SIZE & SHA2-256 & SHA3-256\\
\hline
\hline
space.jl
& 2397
& d991 cca8 6b22 dba8 d055 0fde 89ae db42 a158 dd3d 10ed ab8c cc72 05a0 1aeb b4bf
& 26d7 0de7 72b5 6e8d dffc fa72 465e 87af d904 574f 04c9 1669 0209 a5b5 8bc6 4bba\\
\hline
sr\_ms\_rc.jl
& 1699
& 6197 9bde 1f0a cac8 2bd6 deaa b90d 24cc 7a9a beb3 b798 b533 558b d027 50e8 3054
& 0946 f95d 5288 23b5 4910 8f58 5436 618e 3991 4b14 b392 9d2b 6c01 2f67 ea00 570b\\
\hline
\end{tabular}}}

{\ }

\texttt{sr\_ms\_rc.jl}, with the algorithm itself and the functions it relies on,
almost does not use anything specific to the space.
To change the space, it is enough to modify only \texttt{space.jl}.

{\ }

Typical execution result for $m=2$, $p = 5.6789$, 64 sensors, and $\delta = 0.1$ is shown below.

Note that the number of coverands does not decrease, and the distance error is smaller than the precision $\delta$.

\begin{verbatim}
$ julia sr_ms_rc.jl

Iteration 1: 1 coverands
Iteration 2: 2 coverands
Iteration 3: 4 coverands
Iteration 4: 4 coverands
Iteration 5: 12 coverands
Iteration 6: 12 coverands
Iteration 7: 48 coverands
Iteration 8: 192 coverands
Iteration 9: 576 coverands
Approximated source: Point([7.734374999999999, -9.348958333333337])
Real source: Point([7.701565893029412, -9.36462698238313])
Distance error: 0.032895472278787335
Time: 1.59940107 sec
\end{verbatim}

However, even for $m  = 3$ both memory and time requirements increase significantly.

\end{document}